\documentclass[11pt,a4paper]{article}
\usepackage[T1]{fontenc}
\usepackage[utf8]{inputenc}
\usepackage[margin=1in]{geometry}
\usepackage{amsmath,amssymb,amsthm,mathtools,bm}
\usepackage{booktabs,array,enumitem,microtype}
\usepackage{graphicx,subcaption,algorithm,algpseudocode}
\usepackage{float,placeins,xcolor,tikz}
\usepackage[round,authoryear]{natbib}
\usepackage[hidelinks]{hyperref}
\usetikzlibrary{arrows.meta,positioning}
\graphicspath{{figures/}{./}}
\allowdisplaybreaks
\numberwithin{equation}{section}
\theoremstyle{plain}
\newtheorem{theorem}{Theorem}[section]
\newtheorem{proposition}[theorem]{Proposition}
\newtheorem{corollary}[theorem]{Corollary}
\newtheorem{lemma}[theorem]{Lemma}
\theoremstyle{definition}
\newtheorem{assumption}[theorem]{Assumption}
\newtheorem{definition}[theorem]{Definition}

\newcommand{\X}{\mathbb X}

\newcommand{\E}{\mathbb E}
\newcommand{\Pp}{\mathbb P}

\newcommand{\PPP}{\operatorname{PPP}}
\newcommand{\Pois}{\operatorname{Poisson}}
\newcommand{\Bern}{\operatorname{Bernoulli}}
\newcommand{\Bin}{\operatorname{Binomial}}
\newcommand{\Unif}{\operatorname{Unif}}
\newcommand{\dd}{\,\mathrm d}

\newcommand{\ind}{\mathbf 1}

\newcommand{\Torus}{\mathbb T^2}
\title{Exact Likelihood-Coin Poisson Sampling for Bayesian Inverse Problems with Sharp Complexity Bounds}
\author{Zhiliang Deng\thanks{School of Mathematical Sciences, University of Electronic Science and Technology of China. Email: dengzhl@uestc.edu.cn}\and Xiaomei Yang\thanks{School of Mathematics, Southwest Jiaotong University. Email: yangxiaomath@swjtu.edu.cn}}
\date{}

\begin{document}

\maketitle

\begin{abstract}
We develop an exact posterior-sampling framework for Bayesian inverse problems when selected bounded forward observables can be accessed through Bernoulli events. A Bernstein--Poisson construction converts these forward coins into scaled Gaussian likelihood coins, and thinning an inflated prior Poisson point process yields posterior atoms that are iid conditional on their number. For independent Gaussian observations, we derive an exact mean-work identity for the implemented early-stopped factory and sharp small-noise complexity laws governed by local prior-predictive mass near the exact-fit set; a factorial-moment construction extends the likelihood factory to correlated Gaussian errors. The posterior algorithm is model-agnostic once Bernoulli access is available. As one continuum realization, we use Feynman--Kac sampling, Poisson killing, and lazy random-series evaluation for a bounded elliptic resolvent problem with a function-valued coefficient. Numerical experiments validate the forward and likelihood coins, support the predicted work regimes, and demonstrate posterior sampling without deterministic spatial discretization or fixed parameter truncation in the target. A matched-accuracy benchmark against finite-difference prior rejection illustrates how deterministic discretization bias changes the posterior accuracy--cost balance.
\end{abstract}

\noindent\textbf{Keywords:} Bayesian inverse problems; exact sampling; Poisson thinning; Bernoulli factory; small-ball asymptotics; Feynman--Kac formula; function-valued parameter.\par

\section{Introduction}

Bayesian inverse problems combine uncertain parameters, noisy data, and a forward model through a posterior probability measure. In PDE-constrained problems, posterior computation is often dominated by repeated evaluations of the parameter-to-observation map \cite{KaipioSomersalo2005,Stuart2010}. Markov chain Monte Carlo therefore inherits the cost of a forward solve at each proposal. Exact-likelihood methods based on stochastic estimators can avoid deterministic likelihood evaluation, but typically retain a Markov-chain architecture; pseudo-marginal methods are a canonical example \cite{AndrieuRoberts2009}. Bernoulli factories provide a complementary exact-simulation mechanism when events with unknown probabilities can be generated directly \cite{Huber2016,KeaneOBrien1994,LatuszynskiEtAl2011,NacuPeres2005}. They have been used for perfect or exact sampling, likelihood-free acceptance decisions, and scalable Bayesian constructions \cite{FlegalHerbei2012,StumpfFetizonGoncalves2025,StumpfFetizonEtAl2026,VatsEtAl2022}.

The present paper uses this primitive in a different way. We assume that selected bounded forward observables can be queried through exact Bernoulli events. Rather than average those events to approximate a forward solution and then evaluate a likelihood, we use them directly to manufacture a likelihood coin. That coin thins an inflated Poisson point process whose base measure is the prior. Classical independent thinning then produces a point process with the unnormalized posterior as its intensity; conditional on its cardinality, the retained locations are independent posterior draws. The posterior-as-intensity identity itself is standard point-process theory \cite{Kingman1993,LastPenrose2017} and is not claimed as new. An earlier preprint by the present authors used this viewpoint together with kernel and Gaussian-mixture approximations \cite{DengEtAl2025PPP}; here the likelihood decision is generated directly from stochastic forward events, without a numerical likelihood evaluation or a Markov transition.

At the core of the method is an explicit Gaussian likelihood factory. For conditionally independent observations, a Bernstein statistic gives an unbiased representation of a squared residual, a deterministic compensation makes the statistic Bernoulli-admissible, and a Poisson zero-event transform produces the Gaussian factor exactly up to a known scale. Independent scalar factors then assemble the vector likelihood. To show that factorization is not essential to the Bernoulli-access principle, we also give a compact multivariate factorial-moment extension for correlated Gaussian errors. The sharp work analysis is deliberately developed for the independent architecture, where the stopping mechanism can be characterized cleanly.

The exact construction also permits a sharp analysis of its actual computational work. A conservative full-batch calculation captures the basic order scale but ignores two exact savings: a Bernstein event may be resolved before all forward coins are revealed, and a candidate may be rejected before all auxiliary events are processed. We derive an exact mean-work identity for the implemented stopped sampler. Under a local small-ball law for the prior predictive distribution, the resulting small-noise rate is controlled by the prior mass near the exact-fit set. The transition is governed by an effective local predictive dimension rather than by the nominal dimension of the parameter space. This connects the implementation-level stopping rules directly to a predictive-geometric complexity law.

The posterior algorithm is not tied to a PDE class: its model-facing input is only the Bernoulli-access interface. To demonstrate that this interface is non-vacuous for an infinite-dimensional inverse problem, we construct it explicitly for a bounded class of elliptic resolvent problems. A Feynman--Kac representation turns the reaction factor into Poisson killing and the source factor into a terminal Bernoulli mark. Function-valued unknowns represented by absolutely convergent random series can be queried lazily: coefficients are revealed only until a deterministic tail enclosure resolves the Bernoulli decision. Thus the mathematical posterior target need not contain either a deterministic spatial mesh or a fixed parameter truncation.

The numerical study follows this structure. A two-parameter elliptic benchmark checks the continuum forward coin and then runs actual Bernstein--Poisson likelihood coins under both independent and correlated Gaussian observation errors; calibration compares compensated coin frequencies with exactly known Gaussian likelihood values. A predictive-dimension experiment tests the sharp stopped-work rates using the independent-observation implementation analyzed in the complexity theory. Finally, a heterogeneous reaction-coefficient problem combines the likelihood factory, Poisson killing, and lazy random-series evaluation. In that last experiment the validation target is the deterministic reference posterior, not pointwise recovery of the synthetic truth from a deliberately small data set. A final matched-accuracy benchmark uses the same heterogeneous reaction-coefficient model, observation locations, and data to compare continuum likelihood-coin sampling with finite-difference prior rejection. It includes deterministic posterior discretization bias in the RMS-error budget, accounts for the prior-rejection penalty on both sides, and selects the least-cost admissible FD grid at each target accuracy. The resulting crossover is reported only as a problem- and implementation-specific benchmark, not as a universal efficiency threshold.

\paragraph{Contributions and computational scope.}
The contribution is an exact-simulation methodology rather than a PDE-specific solver. Its model-facing requirement is Bernoulli access to bounded forward observables. Section~\ref{sec:pde} gives one nontrivial continuum realization of this interface, whereas the likelihood factory and posterior Poisson thinning are independent of that particular PDE representation. The numerical study therefore separates calibration of the probabilistic components, validation of the sharp work theory, a function-valued continuum realization, and a matched-accuracy computational benchmark.

The paper is organized as follows. Section~\ref{sec:poisson} develops posterior Poissonization. Section~\ref{sec:factory} constructs the Gaussian likelihood coins, records the correlated extension, and proves the sharp actual-work bounds for the independent sampler. Section~\ref{sec:pde} gives one concrete PDE realization of the Bernoulli interface, including function-valued parameters. Section~\ref{sec:numerics} presents the numerical validation and PDE cost comparison, and Section~\ref{sec:conclusion} summarizes the computational contribution and its implications.

\section{Posterior Poissonization from likelihood coins}
\label{sec:poisson}

This section isolates the point-process part of the method. On a general measurable parameter space, Bayesian updating weights the prior by the likelihood, while independent thinning weights a Poisson intensity by a retention probability. The construction below identifies these two weights; the later sections show how to generate the required retention event without evaluating the likelihood.

\subsection{Poisson point processes on a general parameter space}

Let $(\X,\mathcal X)$ be a measurable space and let $\Lambda$ be a finite measure on it. A PPP with intensity measure $\Lambda$ is a random counting measure $\Xi=\sum_{i=1}^{N}\delta_{Q_i}$,
with the following property: for pairwise disjoint measurable sets $A_1,\ldots,A_m\in\mathcal X$, the counts $\Xi(A_1),\ldots,\Xi(A_m)$ are independent and
$\Xi(A_j)\sim\Pois\bigl(\Lambda(A_j)\bigr)$ for $j=1,\ldots,m$.
We write $\Xi\sim\PPP(\Lambda)$. Standard references include \cite{Kingman1993,LastPenrose2017}. Since $\Lambda(\X)<\infty$, the process may equivalently be generated by
\begin{equation}
 N\sim\Pois\bigl(\Lambda(\X)\bigr),
 \qquad
 Q_i\mid N\stackrel{\rm iid}{\sim}\frac{\Lambda}{\Lambda(\X)}.
 \label{eq:finite-ppp-iid}
\end{equation}
Thus a finite PPP contains two pieces of information: the total mass of its intensity is encoded by the random cardinality, whereas the normalized intensity is encoded by the conditional location law.

Independent thinning is a fundamental property of PPPs. Let $r:\X\to[0,1]$ be measurable and, conditional on $\Xi$, retain each atom $Q_i$ independently with probability $r(Q_i)$. The retained process is a PPP with intensity measure $r\Lambda$, defined by $(r\Lambda)(A)=\int_A r(z)\,\mathrm d\Lambda(z)$, $A\in\mathcal X$.
Thus thinning multiplies the original intensity by the retention function.

Two further standard properties are useful computationally. If $\Xi_1\sim\PPP(\Lambda_1)$ and $\Xi_2\sim\PPP(\Lambda_2)$ are independent, then their superposition $\Xi_1+\Xi_2$ is $\PPP(\Lambda_1+\Lambda_2)$. Conversely, restrictions of a PPP to disjoint measurable subsets are independent. These identities justify splitting the master population into independently processed batches without changing its law and make atom-wise processing embarrassingly parallel. More generally, independent marking of the atoms produces a marked PPP, and thinning is the special case in which the mark records only ``keep'' or ``discard.'' Thus the posterior algorithm can be viewed as a marked point-process construction whose retention mark is manufactured from stochastic PDE events.

\subsection{Bayesian updating as selective survival}

Let $q\in\X$
denote the unknown, and let $\mu_0$ be its prior probability measure.
For fixed data $y$, let $L(q; y)\in[0,1]$ be a measurable reduced
likelihood and assume $0<Z(y):=\int_\X L(q; y)\,\mathrm d\mu_0(q)<\infty$.
Define
\[
\mathrm d\nu^y(q)=L(q;y)\,\mathrm d\mu_0(q),
 \qquad
 \frac{\mathrm d\mu^y}{\mathrm d\mu_0}(q)
 =\frac{L(q;y)}{Z(y)}.
\]
If a PPP with intensity $\mu_0$ could be thinned with retention probability $L(q;y)$, its retained intensity would be $\nu^y$. In a PDE inverse problem the likelihood value may be unavailable without a forward solve, so we replace the numerical value by a Bernoulli decision having the required success probability.

\begin{definition}[Scaled likelihood coin]
A scaled likelihood-coin oracle with known factor $c\in(0,1]$ returns, when queried at a parameter value $q\in\X$, a Bernoulli variable $C_q\in\{0,1\}$ satisfying
\begin{equation}
 \Pp(C_q=1)=cL(q;y).
 \label{eq:scaled-coin}
\end{equation}
Repeated queries use independent auxiliary randomness. The numerical value of $L(q;y)$ need not be available.
\end{definition}

We use a positive parameter $\gamma$ as a Poissonization scale.
It controls the overall size of the generated point process; in particular,
the retained process will have expected cardinality $\gamma Z(y)$.

\begin{theorem}[Likelihood-coin posterior Poissonization]
\label{thm:coin-ppp}
Suppose that the scaled likelihood-coin oracle
\eqref{eq:scaled-coin} is available. For any Poissonization scale
$\gamma>0$, generate
$\Xi_\gamma=\sum_{i=1}^{M_\gamma}\delta_{Q_i}\sim\PPP\!\left(\frac{\gamma}{c}\mu_0\right)$.
Conditional on $\Xi_\gamma$, query the oracle independently at each atom
$Q_i$ and denote the resulting Bernoulli mark by $C_i$, so that
$\Pp(C_i=1\mid Q_i)=cL(Q_i; y)$.
Define the retained point process by
$\eta_\gamma:=\sum_{i=1}^{M_\gamma}C_i\,\delta_{Q_i}$.
Then
$\eta_\gamma\sim\PPP(\gamma\nu^y)$.
Consequently,
\begin{equation}
 N_\gamma:=\eta_\gamma(\X)\sim\Pois\bigl(\gamma Z(y)\bigr),
 \label{eq:count-poisson}
\end{equation}
and, conditional on $N_\gamma=k\ge1$,
\[
\eta_\gamma\mid\{N_\gamma=k\}\overset{d}{=}\sum_{i=1}^k\delta_{Q_i^\star},
 \qquad
 Q_i^\star\stackrel{\rm iid}{\sim}\mu^y.
\]
Moreover, $\widehat Z_\gamma=\frac{N_\gamma}{\gamma}$
is unbiased for $Z(y)$ with $\operatorname{Var}(\widehat Z_\gamma)=\frac{Z(y)}{\gamma}$.
\end{theorem}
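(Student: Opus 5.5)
The plan is to reduce everything to the independent thinning property of Poisson point processes recalled above, with retention function $r(q):=cL(q;y)$.

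First, I check that $r$ is admissible. Since $L(\cdot;y)$ is measurable with values in $[0,1]$ and $c\in(0,1]$, the function $r$ is measurable and takes values in $[0,1]$. The master intensity $\Lambda:=\frac{\gamma}{c}\mu_0$ is finite, with $\Lambda(\X)=\gamma/c$. So $\Xi_\gamma$ can be generated as in \eqref{eq:finite-ppp-iid}: $M_\gamma\sim\Pois(\gamma/c)$, and conditionally the atoms are iid $\mu_0$.

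The one genuinely non-formal point is the following. The oracle must act as independent thinning with retention probability $r(Q_i)$, given $\Xi_\gamma$. This holds by the definition of the oracle: repeated queries use independent auxiliary randomness, and $\Pp(C_i=1\mid Q_i)=cL(Q_i;y)$. The key requirement is that each mark depends only on its own atom and on fresh randomness. I expect this to be the only step needing care.

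To make the thinning step self-contained, I compute the Laplace functional. For measurable $f\ge0$, condition on $M_\gamma=m$. The pairs $(Q_i,C_i)$ are then iid, so
\[
\E\bigl[e^{-\eta_\gamma(f)}\bigr]=\sum_{m\ge0}e^{-\gamma/c}\frac{(\gamma/c)^m}{m!}\Bigl(\int\bigl(1-r+re^{-f}\bigr)\,\mathrm d\mu_0\Bigr)^m .
\]
Summing the series gives
\[
\E\bigl[e^{-\eta_\gamma(f)}\bigr]=\exp\Bigl(-\frac{\gamma}{c}\int r\bigl(1-e^{-f}\bigr)\,\mathrm d\mu_0\Bigr)=\exp\Bigl(-\int\bigl(1-e^{-f}\bigr)\,\gamma\,\mathrm d\nu^y\Bigr).
\]
This is the Laplace functional of $\PPP(\gamma\nu^y)$, so $\eta_\gamma\sim\PPP(\gamma\nu^y)$. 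The factor $c$ cancels exactly; this is the reason for inflating the intensity by $1/c$.

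Next, $\gamma\nu^y$ is finite with total mass $\gamma Z(y)$, where $0<Z(y)<\infty$. Taking $A=\X$ in the definition of a PPP gives \eqref{eq:count-poisson}. The conditional statement follows from the finite-PPP representation \eqref{eq:finite-ppp-iid} applied to $\eta_\gamma$: given $N_\gamma=k$, the atoms are iid with law $\gamma\nu^y/(\gamma Z(y))=\mu^y$. Here I need the representation in its two-way form. One direction is the construction; the converse, that the conditional law of a finite PPP given its count is the iid normalized law, is standard and follows from the same Laplace-functional computation restricted to $\{N=k\}$. Alternatively, it follows from uniqueness of law given the count and iid locations.

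Finally, $N_\gamma\sim\Pois(\gamma Z(y))$ gives $\E N_\gamma=\operatorname{Var}N_\gamma=\gamma Z(y)$. Hence $\widehat Z_\gamma=N_\gamma/\gamma$ has mean $Z(y)$ and variance $Z(y)/\gamma$.
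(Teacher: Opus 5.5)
Your proposal is correct and follows the same route as the paper. You treat the oracle marks as independent thinning of $\PPP(\frac{\gamma}{c}\mu_0)$ with retention $cL(\cdot;y)$, read off the count and the conditional iid law from the finite-PPP representation \eqref{eq:finite-ppp-iid}, and get the mean and variance of $\widehat Z_\gamma$ from Poisson moments. The only difference is that you verify the thinning step directly through the Laplace functional instead of citing the independent-thinning theorem, which makes the argument self-contained without changing its structure.
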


\begin{proof}
Conditional on the master process, the marks $C_i$ are independent and
the retention probability of an atom at $z\in\X$ is $cL(z;y)$.
Therefore, by the independent-thinning theorem, the point process $\eta_\gamma=\sum_{i=1}^{M_\gamma}C_i\delta_{Q_i}$
has intensity $\frac{\gamma}{c}\,cL(z; y)\,\mu_0(\mathrm dz)=\gamma\,\nu^y(\mathrm dz)$.
Hence $\eta_\gamma\sim\PPP(\gamma\nu^y)$.
Its total intensity is $\gamma Z(y)$, which gives
\eqref{eq:count-poisson}. Conditional on $N_\gamma=k$, the finite-PPP
representation \eqref{eq:finite-ppp-iid} gives iid locations with law $\frac{\nu^y}{Z(y)}=\mu^y$.
Finally, since $N_\gamma\sim\Pois(\gamma Z(y))$,
$\E\widehat Z_\gamma=Z(y)$, $\operatorname{Var}(\widehat Z_\gamma)=\frac{Z(y)}{\gamma}$.
\end{proof}

Conditional on $N_\gamma=k$, Theorem~\ref{thm:coin-ppp} makes the retained
locations an ordinary iid posterior sample.  Hence, for any square-integrable posterior functional $\varphi$, the empirical posterior average has conditional variance
\begin{align*}
\operatorname{Var}\left(\frac{1}{k}\sum_{i=1}^k \varphi(Q_i^*)\mid N_\gamma=k\right)=\frac{\operatorname{Var}_{\mu^y}(\varphi)}{k}.
\end{align*}
The same theorem
also makes the rejection cost explicit: $\E M_\gamma=\gamma/c$ and
$\E N_\gamma=\gamma Z(y)$, so the ratio of their means is $[cZ(y)]^{-1}$.
This ratio records the average candidate count associated with the outer thinning step; the forward-oracle work per processed candidate is analyzed separately in Section~\ref{sec:factory}.

The complete construction is summarized in
Figure~\ref{fig:pipeline}. A master PPP drawn from the prior supplies the
candidate atoms. Stochastic PDE representations generate the required
forward coins, the Bernstein--Poisson factory converts them into a scaled
likelihood coin, and independent thinning produces the posterior PPP.
Sections~\ref{sec:factory} and~\ref{sec:pde} develop the two middle steps.

\begin{figure}[t]
\centering
\begin{tikzpicture}[node distance=8mm and 7mm,>=Latex,
box/.style={draw,rounded corners,align=center,minimum height=10mm,text width=25mm,fill=gray!4,font=\small}]
\node[box] (prior) {Master prior PPP\\$\PPP(\gamma c^{-1}\mu_0)$};
\node[box,right=of prior] (pde) {Bernoulli forward\\$p$-coin oracle};
\node[box,right=of pde] (factory) {Bernstein--Poisson\\likelihood coin};
\node[box,right=of factory] (post) {posterior PPP\\$\PPP(\gamma\nu^y)$};
\draw[->] (prior)--(pde);
\draw[->] (pde)--(factory);
\draw[->] (factory)--node[above]{thin}(post);
\end{tikzpicture}
\caption{Likelihood-coin Poisson sampling.
A Bernoulli forward oracle supplies the input coins for a scaled likelihood coin, which thins the master prior PPP into the target posterior PPP.}
\label{fig:pipeline}
\end{figure}

\section{Bernstein--Poisson likelihood coins for Gaussian observations}
\label{sec:factory}

The posterior construction of Section~\ref{sec:poisson} requires a scaled likelihood coin, while the stochastic PDE mechanisms developed later provide only Bernoulli access to individual bounded forward observables. We first treat conditionally independent Gaussian observation errors, for which the likelihood factory, the exact stopping rules, and the sharp work analysis share a common componentwise architecture. A compact correlated-Gaussian extension is given in Section~\ref{subsec:correlated-factory}. Conditional on the unknown $q$, let
\begin{equation}
 y_j=p_j(q)+\varepsilon_j,
 \qquad
 \varepsilon_j\sim\mathcal N(0,\sigma_j^2),
 \qquad j=1,\ldots,J,
 \label{eq:obsmodel}
\end{equation}
where $p_j(q)$, $y_j\in[0,1]$ and the noise components are independent. The quantities $p_j(q)$ need not be independent or otherwise unrelated: conditional on $q$ they are deterministic values, and in the PDE applications they are typically different observations of the same solution. After removing the Gaussian normalizing constants, the reduced likelihood is
\begin{equation}
 L(q;y)
 =\exp\!\left\{-\sum_{j=1}^J a_j[p_j(q)-y_j]^2\right\},
 \qquad a_j=\frac{1}{2\sigma_j^2}.
 \label{eq:multi-L}
\end{equation}
The values $p_j(q)$ are not supplied to the sampler. For a queried parameter value, the sampler can only call an oracle that returns independent Bernoulli variables with the corresponding success probability. Since the vector likelihood factorizes, it is enough to construct a coin for one scalar factor
\begin{equation}
 \ell_y(p)=e^{-a(p-y)^2},
 \qquad p, \, y\in[0,1],\quad a>0,
 \label{eq:one-likelihood}
\end{equation}
from independent $p$-coins, without evaluating $p$.

The centered scalar factor $y=0$ is unusually simple: two independent $p$-coins produce a $p^2$-event, and a Poisson zero-event transform then gives $e^{-ap^2}$. For a general component $y$, however, the term $(p-y)^2=p^2-2yp+y^2$ contains a signed cross term and cannot itself be used as an event probability. The construction below resolves precisely this scalar obstruction; the full vector likelihood is recovered by multiplying the resulting independent component coins.

For an integer $n\ge2$, let $X_1, \ldots, X_n\stackrel{\rm iid}{\sim}\Bern(p)$, $K=\sum_{i=1}^nX_i\sim\Bin(n, p)$,
and define
\begin{equation}
 \beta_{k,n}(y)
 =y^2-\frac{2yk}{n}+\frac{k(k-1)}{n(n-1)},
 \qquad k=0,\ldots,n.
 \label{eq:beta}
\end{equation}

\begin{lemma}
\label{lem:square}
For $K\sim\Bin(n, p)$, $\E\,\beta_{K, n}(y)=(p-y)^2$.
\end{lemma}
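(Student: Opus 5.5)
The argument is a direct moment computation: \(\beta_{K,n}(y)\) is affine in the three quantities \(1\), \(K\) and \(K(K-1)\), so linearity of expectation reduces the claim to the first two factorial moments of the binomial distribution. I will take \(\E K\) and \(\E[K(K-1)]\) and then collect terms.

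\emph{First moment.} Since \(K=\sum_{i=1}^n X_i\) with \(X_i\sim\Bern(p)\), we have \(\E K=np\). Hence
\[
\E\Bigl[\tfrac{2yK}{n}\Bigr]=2yp .
\]

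\emph{Second factorial moment.} Write \(K(K-1)=\sum_{i\neq j}X_iX_j\). This uses \(X_i^2=X_i\), so that
\[
K^2-K=\sum_i X_i^2+\sum_{i\ne j}X_iX_j-\sum_i X_i=\sum_{i\ne j}X_iX_j .
\]
The sum has \(n(n-1)\) ordered pairs, and by independence each term satisfies \(\E[X_iX_j]=p^2\). Therefore \(\E[K(K-1)]=n(n-1)p^2\), and
\[
\E\Bigl[\tfrac{K(K-1)}{n(n-1)}\Bigr]=p^2 .
\]
The hypothesis \(n\ge 2\) is needed here so that the denominator \(n(n-1)\) is nonzero.

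\emph{Combining.} Substituting both moments into \eqref{eq:beta} gives
\[
\E\,\beta_{K,n}(y)=y^2-2yp+p^2=(p-y)^2 ,
\]
which is the claim.

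There is no substantive obstacle. The only point that needs care is the factorial-moment identity, which relies on \(X_i^2=X_i\) to cancel the diagonal terms against \(-\sum_i X_i\). This is what makes \(K(K-1)/(n(n-1))\) an unbiased estimator of \(p^2\), whereas \(K^2/n^2\) would be biased.
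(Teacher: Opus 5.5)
Your proof is correct and is the same as the paper's, which simply substitutes $\E K=np$ and $\E[K(K-1)]=n(n-1)p^2$ into \eqref{eq:beta}. The only difference is that you also derive the second factorial moment from $K(K-1)=\sum_{i\ne j}X_iX_j$, which the paper takes as known.
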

\begin{proof}
Use $\E K=np$ and $\E[K(K-1)]=n(n-1)p^2$ in \eqref{eq:beta}.
\end{proof}

The statistic $\beta_{K,n}(y)$ is unbiased for the squared residual but may take negative values. Moreover, setting $p=y$ in Lemma~\ref{lem:square} gives
$\E\,\beta_{K, n}(y)=0$, $K\sim\Bin(n, y)$,
and hence $\min_{0\le k\le n}\beta_{k,n}(y)\le0$.
We therefore define the smallest nonnegative upward shift
\[
b_n(y):=-\min_{0\le k\le n}\beta_{k,n}(y)\ge0.
\]
Set
\begin{equation}
c_{k,n}(y)=\beta_{k,n}(y)+b_n(y),
\qquad
C_n(y)=\max\left\{1,\max_{0\le k\le n}c_{k,n}(y)\right\}.
\label{eq:ck}
\end{equation}
The shift makes $c_{k,n}(y)$ nonnegative, while the normalization by
$C_n(y)$ places it in $[0,1]$. We can therefore convert the random statistic
$c_{K,n}(y)$ into a Bernoulli event. After generating $K$, define the
auxiliary Bernoulli variable
\begin{equation}
H_{n,y}\mid K=k
\sim\Bern\left(\frac{c_{k,n}(y)}{C_n(y)}\right).
\label{eq:Hcoin}
\end{equation}
By conditioning on $K$ and using Lemma~\ref{lem:square},
\[
\Pp(H_{n, y}=1)
=\frac{b_n(y)+(p-y)^2}{C_n(y)}.
\]
For \(y\in[0,1]\), one in fact has
\(0\le c_{k,n}(y)\le1\), and hence \(C_n(y)=1\);
see Appendix~\ref{app:bernstein-compensation}.
We retain $C_n(y)$ in the formulas because it makes the admissibility of the Bernoulli construction explicit and allows the same argument to remain valid under simple rescalings.

Crucially, the Bernstein coefficients \(c_{k,n}(y)\) are completely
determined by the observed datum \(y\), the order \(n\), and the realized
success count \(k\); they do not require the numerical value of \(p\).
The forward model enters the construction only through
\(K\sim\Bin(n,p)\), which is generated by repeated calls to the
\(p\)-coin oracle.

\begin{theorem}[Bernstein--Poisson Gaussian likelihood coin]
\label{thm:onefactory}
Let $R\sim\Pois(aC_n(y))$. Independently generate $R$ copies of $H_{n,y}$ from \eqref{eq:Hcoin}, and define the Gaussian likelihood coin $G_{n, y}$ by setting $G_{n, y}=1$ if and only if all $R$ copies are zero. Then
\begin{equation}
 \Pp(G_{n, y}=1)
 =e^{-ab_n(y)}e^{-a(p-y)^2}.
 \label{eq:one-factory-prob}
\end{equation}
Thus $G_{n, y}$ is a scaled likelihood coin for \eqref{eq:one-likelihood}, with known scaling factor $e^{-ab_n(y)}$.
\end{theorem}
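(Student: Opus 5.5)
The plan is to condition on $R$ and use the Poisson probability generating function. Each copy of $H_{n,y}$ is generated from its own independent batch of $n$ forward $p$-coins, together with its own auxiliary uniform. So, conditional on $R=r$, the $r$ copies are iid Bernoulli variables with success probability
\[
h:=\Pp(H_{n,y}=1)=\frac{b_n(y)+(p-y)^2}{C_n(y)}.
\]
This value was computed just before the theorem by conditioning on $K$ and invoking Lemma~\ref{lem:square}. Admissibility, $h\in[0,1]$, holds because $0\le c_{k,n}(y)\le C_n(y)$ by the definitions in \eqref{eq:ck}, so each conditional probability in \eqref{eq:Hcoin} lies in $[0,1]$.

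Next I would compute $\Pp(G_{n,y}=1\mid R=r)=(1-h)^r$, with the empty product equal to $1$ when $r=0$. Averaging over $R\sim\Pois(aC_n(y))$ with the Poisson generating function $\E s^R=e^{\lambda(s-1)}$ gives
\[
\Pp(G_{n,y}=1)=\exp\{-aC_n(y)\,h\}=\exp\{-a[b_n(y)+(p-y)^2]\}.
\]
The factor $C_n(y)$ cancels exactly, which is why the Poisson rate is chosen as $aC_n(y)$. This expression factors into \eqref{eq:one-factory-prob}.

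Finally, I would check that the scaling factor $e^{-ab_n(y)}$ is known and lies in $(0,1]$. It depends only on $y$, $n$ and $a$, since $b_n(y)$ is a minimum over the explicit finite list $\beta_{k,n}(y)$. It is at most $1$ because $b_n(y)\ge0$. So $G_{n,y}$ meets the scaled-coin definition \eqref{eq:scaled-coin} with $c=e^{-ab_n(y)}$ for the factor \eqref{eq:one-likelihood}.

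The only delicate point is the independence bookkeeping. Fresh forward coins and fresh auxiliary randomness must be used for each copy, and all of them must be independent of $R$. Without this, the conditional law would not be $\Bin(r,h)$ and the generating-function step would fail. I would state this explicitly as part of the construction rather than treat it as an obstacle.
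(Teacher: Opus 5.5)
Your proposal is correct and follows the paper's proof essentially verbatim: condition on $R=r$ to get $(1-h)^r$, then apply the Poisson generating function with $s=1-h$, so that $C_n(y)$ cancels. Your explicit remarks on the admissibility of $h$, on $e^{-ab_n(y)}\in(0,1]$, and on using fresh independent forward coins for each copy are sound additions that the paper leaves implicit.
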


\begin{proof}
Conditioning on $R=r$ gives
\[
 \Pp(G_{n, y}=1\mid R=r)
 =\left\{1-\frac{b_n(y)+(p-y)^2}{C_n(y)}\right\}^{r}.
\]
Using the probability generating function of a Poisson variable,
$\E[s^R]=\exp\{aC_n(y)(s-1)\}$, with
$s=1-[b_n(y)+(p-y)^2]/C_n(y)$ proves \eqref{eq:one-factory-prob}.
\end{proof}

The deterministic compensation can be evaluated explicitly.

\begin{proposition}
\label{prop:bn}
Let $k_*(y, n)=y(n-1)+\frac12$, $d_n(y)=\operatorname{dist}(k_*(y, n), \mathbb Z)$.
For $y\in[0,1]$ and $n\ge2$,
\begin{equation}
 b_n(y)=\frac{y(1-y)}{n}+\frac{\frac14-d_n(y)^2}{n(n-1)}.
 \label{eq:bnformula}
\end{equation}
Consequently,
\begin{equation}
 \frac{y(1-y)}{n}\le b_n(y) \le \frac{y(1-y)}{n}+\frac{1}{4n(n-1)},
 \label{eq:bnbounds}
\end{equation}
and
\[
b_n(y)=\frac{y(1-y)}{n}+O(n^{-2}).
\]
\end{proposition}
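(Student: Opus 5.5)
We compute $b_n(y)=-\min_k\beta_{k,n}(y)$ directly by treating $\beta_{k,n}(y)$ as a quadratic in $k$, completing the square, and minimizing over the integer grid $\{0,\dots,n\}$. Multiplying \eqref{eq:beta} by $n(n-1)$ gives
\[
n(n-1)\beta_{k,n}(y)=k^2-k\bigl(1+2y(n-1)\bigr)+y^2n(n-1).
\]
The vertex sits at $k=\tfrac12+y(n-1)=k_*(y,n)$. Completing the square therefore yields
\[
n(n-1)\beta_{k,n}(y)=(k-k_*)^2-k_*^2+y^2n(n-1).
\]
A routine expansion gives the constant term:
\[
-k_*^2+y^2n(n-1)=-\tfrac14-y(n-1)+y^2(n-1)=-\tfrac14-y(1-y)(n-1).
\]
Hence
\[
\beta_{k,n}(y)=\frac{(k-k_*)^2-\frac14}{n(n-1)}-\frac{y(1-y)}{n}.
\]

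Next we minimize over $k\in\{0,\dots,n\}$. Since the expression is increasing in $|k-k_*|$, the minimum is attained at an integer nearest to $k_*$. The only point requiring care is that this nearest integer must lie in $\{0,\dots,n\}$. For $y\in[0,1]$ we have $k_*\in[\tfrac12,n-\tfrac12]$, so a nearest integer (either $\lfloor k_*\rfloor$ or $\lceil k_*\rceil$) belongs to $\{0,\dots,n\}$. At the tie $k_*\in\mathbb Z+\tfrac12$ both neighbours qualify, and in particular the endpoints $0$ and $n$ are reachable exactly when needed. The minimum of $(k-k_*)^2$ over the grid is therefore $d_n(y)^2$. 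Substituting and negating gives
\[
b_n(y)=\frac{y(1-y)}{n}+\frac{\frac14-d_n(y)^2}{n(n-1)},
\]
which is \eqref{eq:bnformula}.

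The bounds \eqref{eq:bnbounds} follow at once from $0\le d_n(y)\le\tfrac12$, because the second term of \eqref{eq:bnformula} then lies in $[0,\tfrac1{4n(n-1)}]$. The asymptotic statement $b_n(y)=\frac{y(1-y)}{n}+O(n^{-2})$ follows because $\frac1{4n(n-1)}=O(n^{-2})$ uniformly in $y$.

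The main obstacle is the second step, namely verifying that the unconstrained nearest integer is admissible. This is the only place where $y\in[0,1]$ is used. It is handled by the range $k_*\in[\tfrac12,n-\tfrac12]$ noted above.
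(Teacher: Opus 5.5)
Your proof is correct and follows essentially the same route as the paper: complete the square to get $\beta_{k,n}(y)=\frac{(k-k_*)^2-1/4}{n(n-1)}-\frac{y(1-y)}{n}$, use $k_*\in[\tfrac12,n-\tfrac12]$ to place a nearest integer in $\{0,\dots,n\}$, and obtain the bounds from $0\le d_n(y)\le\tfrac12$. You additionally write out the constant-term computation $-k_*^2+y^2n(n-1)=-\tfrac14-y(1-y)(n-1)$, which the paper leaves implicit.
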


\begin{proof}
As a function of a real variable $k$, \eqref{eq:beta} is a convex quadratic with coefficient $[n(n-1)]^{-1}$ and minimizer $k_*$. Completing the square gives
\[
 \beta_{k,n}(y)
 =-\frac{y(1-y)}{n}-\frac{1}{4n(n-1)}
 +\frac{(k-k_*)^2}{n(n-1)}.
\]
The nearest integer to $k_*\in[1/2,n-1/2]$ belongs to $\{0,\ldots,n\}$. Taking the discrete minimum and changing sign gives \eqref{eq:bnformula}; \eqref{eq:bnbounds} follows from $0\le d_n\le1/2$.
\end{proof}

\subsection{Assembly of the vector likelihood}

The preceding subsection constructs an exact scaled likelihood coin for a single scalar Gaussian observation. We now assemble these scalar constructions for a vector of conditionally independent observations. Choose an order $n_j\ge2$ for each observation component and construct independent coins $G_{n_j,y_j}$ by Theorem~\ref{thm:onefactory}. Define
$B_{\bm n}(y)=\sum_{j=1}^J a_jb_{n_j}(y_j)$.
Here $a_j=(2\sigma_j^2)^{-1}$ is the componentwise counterpart of the scalar coefficient $a$ used above.
Since the Gaussian likelihood factorizes across the observation components, the logical AND of the $J$ component coins has success probability
\begin{equation}
e^{-B_{\bm n}(y)}L(q;y).
\label{eq:multi-scaled}
\end{equation}

\begin{corollary}
\label{cor:gaussian-ppp}
Generate
$\Xi_\gamma\sim\PPP\left(\gamma e^{B_{\bm n}(y)}\mu_0\right)$,
write the master process as $\Xi_\gamma=\sum_i\delta_{Q_i}$ and retain an atom $Q_i$ if all component likelihood coins generated from the $p_j(Q_i)$-oracles succeed. Then
$\eta_\gamma\sim\PPP(\gamma\nu^y)$.
No numerical value of $p_j(q)$, $L(q; y)$, or $Z(y)$ is required by the sampler.
\end{corollary}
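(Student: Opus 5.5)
The plan is to reduce Corollary~\ref{cor:gaussian-ppp} to Theorem~\ref{thm:coin-ppp} by checking that the logical AND of the component coins is a scaled likelihood-coin oracle with known factor $c=e^{-B_{\bm n}(y)}$. Once that is in place, $\gamma/c=\gamma e^{B_{\bm n}(y)}$, so the master process in the corollary is exactly the master process $\PPP(\frac{\gamma}{c}\mu_0)$ of Theorem~\ref{thm:coin-ppp}, and the conclusion $\eta_\gamma\sim\PPP(\gamma\nu^y)$ follows immediately.

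First I will verify the oracle property at a fixed parameter value $q$. For each $j$ the $p_j(q)$-oracle supplies iid Bernoulli$(p_j(q))$ coins. The construction of Theorem~\ref{thm:onefactory}, with $a=a_j$, $n=n_j$ and datum $y_j$, then produces $G_{n_j,y_j}$ with success probability
\[
e^{-a_jb_{n_j}(y_j)}e^{-a_j(p_j(q)-y_j)^2}.
\]
The components use disjoint, independent auxiliary randomness: separate forward-coin calls, separate Poisson counts $R_j$, and separate $H$-coins. So, conditional on $q$, the $J$ coins are independent, and the probability that all succeed is the product. That product equals $e^{-B_{\bm n}(y)}L(q;y)$ by \eqref{eq:multi-L}, which is \eqref{eq:multi-scaled}.

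Next I will check admissibility. We have $c=e^{-B_{\bm n}(y)}\in(0,1]$ because $b_{n_j}(y_j)\ge0$ by Proposition~\ref{prop:bn}. The factor $c$ is known, since it depends only on $y$, $\bm n$ and $\sigma_j$. In addition, $L\in[0,1]$ is measurable in $q$ whenever each $p_j$ is measurable. Repeated queries at different atoms use fresh independent randomness, so the marks are conditionally independent given the master process, as the thinning theorem requires.

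The only step needing care is this conditional-independence bookkeeping: I must show that, given $\Xi_\gamma$, the marks $C_i=\prod_j G^{(i)}_{n_j,y_j}$ are independent across atoms with $\Pp(C_i=1\mid Q_i)=cL(Q_i;y)$. The dependence among the values $p_j(q)$ for a fixed $q$ is irrelevant here, because conditional on $q$ these values are deterministic numbers. Finally, the sampler never uses $p_j(q)$, $L$ or $Z$. It needs only the coefficients $c_{k,n_j}(y_j)$, the constants $C_{n_j}(y_j)$ and $a_j$, and $B_{\bm n}(y)$, all of which are computable from the data via \eqref{eq:ck} and Proposition~\ref{prop:bn}. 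This gives the final claim of the corollary.
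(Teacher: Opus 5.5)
Your proposal is correct and follows the paper's proof. The paper likewise reads off from \eqref{eq:multi-scaled} that the AND of the independent component coins is a scaled likelihood coin with factor $c=e^{-B_{\bm n}(y)}$, and then applies Theorem~\ref{thm:coin-ppp} with $\gamma/c=\gamma e^{B_{\bm n}(y)}$; your extra bookkeeping (independence across components and atoms, $c\in(0,1]$, and reliance only on data-determined constants) simply makes explicit what the paper leaves implicit.
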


\begin{proof}
Equation \eqref{eq:multi-scaled} gives a scaled likelihood coin with factor $c=e^{-B_{\bm n}(y)}$. The result follows from Theorem~\ref{thm:coin-ppp}.
\end{proof}

\subsection{A compact extension to correlated Gaussian observations}
\label{subsec:correlated-factory}

The componentwise construction above exploits likelihood factorization. This is the architecture used in the sharp work analysis below, but factorization is not essential to the Bernoulli-access principle. To make this point without introducing a second complexity theory, we record a multivariate construction for
\[
 y=p(q)+\varepsilon,\qquad
 \varepsilon\sim\mathcal N(0,\sigma^2\Sigma),\qquad
 \Omega=\Sigma^{-1}\succ0,
\]
with $p(q)=(p_1(q),\ldots,p_J(q))^\top\in[0,1]^J$ and $a_\sigma=(2\sigma^2)^{-1}$. The reduced likelihood is
\[
 L_\Sigma(q;y)
 =\exp\{-a_\sigma[p(q)-y]^\top\Omega[p(q)-y]\}.
\]
Choose orders $n_j\ge2$ and, using independent auxiliary oracle randomness, draw
\[
 K_j\sim\Bin(n_j,p_j(q)),\qquad
 \widehat p_j=\frac{K_j}{n_j},\qquad
 \widehat p_j^{[2]}=\frac{K_j(K_j-1)}{n_j(n_j-1)}.
\]
Define
\[
\begin{aligned}
 \beta_{\bm n}^{\Omega}(K;y)
 ={}&y^\top\Omega y-2y^\top\Omega\widehat p
 +\sum_{j=1}^J\Omega_{jj}\widehat p_j^{[2]}
 +2\sum_{1\le i<j\le J}\Omega_{ij}\widehat p_i\widehat p_j .
\end{aligned}
\]
The cross-products are unbiased because the auxiliary counts are independent conditional on $q$.

\begin{theorem}[Correlated-Gaussian likelihood coin]
\label{thm:correlated-factory}
For every $q$,
\[
\E\,\beta_{\bm n}^{\Omega}(K;y)=[p(q)-y]^\top\Omega[p(q)-y].
\]
Let
\[
 b_{\bm n}^{\Omega}(y)
 =\left[-\min_{0\le k_j\le n_j}\beta_{\bm n}^{\Omega}(k;y)\right]_+,
 \qquad
 C_{\bm n}^{\Omega}(y)
 =\max\left\{1,\max_k\bigl[\beta_{\bm n}^{\Omega}(k;y)+b_{\bm n}^{\Omega}(y)\bigr]\right\}.
\]
Conditional on $K$, generate
\[
 H_{\bm n,y}^{\Omega}\sim
 \Bern\!\left(
 \frac{\beta_{\bm n}^{\Omega}(K;y)+b_{\bm n}^{\Omega}(y)}
 {C_{\bm n}^{\Omega}(y)}\right).
\]
If $R\sim\Pois(a_\sigma C_{\bm n}^{\Omega}(y))$ and
$G_{\bm n,y}^{\Omega}=1$ exactly when all $R$ independent copies of
$H_{\bm n,y}^{\Omega}$ are zero, then
\[
 \Pp(G_{\bm n,y}^{\Omega}=1)
 =e^{-a_\sigma b_{\bm n}^{\Omega}(y)}L_\Sigma(q;y).
\]
Hence correlated Gaussian observation errors can be handled without factorizing the likelihood or evaluating the forward probabilities numerically.
\end{theorem}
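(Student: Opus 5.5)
The proof follows the scalar argument of Lemma~\ref{lem:square} and Theorem~\ref{thm:onefactory}, carried out in two stages: unbiasedness of the multivariate statistic, then the Poisson zero-event transform.

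\textbf{Unbiasedness.} Expand the quadratic form as
\[
[p-y]^\top\Omega[p-y]=y^\top\Omega y-2y^\top\Omega p+\sum_j\Omega_{jj}p_j^2+2\sum_{i<j}\Omega_{ij}p_ip_j,
\]
using the symmetry of $\Omega$. Match this term by term with $\beta_{\bm n}^{\Omega}(K;y)$:
\begin{itemize}
\item the linear terms are unbiased because $\E\widehat p_j=p_j$;
\item the diagonal terms are unbiased because $\E[K_j(K_j-1)]=n_j(n_j-1)p_j^2$, exactly as in Lemma~\ref{lem:square};
\item the off-diagonal terms are unbiased because the counts $K_i$ and $K_j$ use independent oracle randomness conditional on $q$, so $\E[\widehat p_i\widehat p_j]=\E\widehat p_i\,\E\widehat p_j=p_ip_j$.
\end{itemize}
This is the only place where the construction differs from the scalar case. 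The main point to state carefully is the conditional independence of the $K_j$, which is what prevents a correlation between different forward observables of the same solution from biasing the cross terms.

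\textbf{Admissibility of the Bernoulli probability.} The minimum and maximum of $\beta_{\bm n}^{\Omega}(k;y)$ are taken over the finite grid $\prod_j\{0,\ldots,n_j\}$, so $b_{\bm n}^{\Omega}$ and $C_{\bm n}^{\Omega}$ are finite. They depend only on $y$, $\Omega$ and $\bm n$, not on $p(q)$. The shifted statistic $\beta+b$ is nonnegative on the grid, since $b_{\bm n}^{\Omega}\ge-\min\beta$ by definition of the positive part. Dividing by $C_{\bm n}^{\Omega}\ge\max(\beta+b)$ and $C_{\bm n}^{\Omega}\ge1$ places the ratio in $[0,1]$, so $H_{\bm n,y}^{\Omega}$ is well defined. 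Conditioning on $K$ and using unbiasedness gives
\[
\Pp(H_{\bm n,y}^{\Omega}=1)=\frac{b_{\bm n}^{\Omega}(y)+Q}{C_{\bm n}^{\Omega}(y)},\qquad Q:=[p(q)-y]^\top\Omega[p(q)-y].
\]
In particular this probability lies in $[0,1]$.

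\textbf{Poisson zero-event transform.} Condition on $R=r$. Since the $r$ copies of $H_{\bm n,y}^{\Omega}$ use fresh independent counts $K$, each copy fails independently, and the probability that all are zero is $(1-s)^r$, where $s=(b+Q)/C$. Applying the Poisson probability generating function $\E[t^R]=\exp\{a_\sigma C(t-1)\}$ with $t=1-s$ gives
\[
\Pp(G_{\bm n,y}^{\Omega}=1)=\exp\{-a_\sigma(b+Q)\}=e^{-a_\sigma b_{\bm n}^{\Omega}(y)}\,L_\Sigma(q;y).
\]

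The remaining routine point is that each copy of $H$ must regenerate its own $K$, so that the copies are i.i.d. given $q$. Combining this coin with Theorem~\ref{thm:coin-ppp} is then immediate, with scaling factor $c=e^{-a_\sigma b_{\bm n}^{\Omega}(y)}$.
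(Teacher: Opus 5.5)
Your proposal is correct and follows the paper's proof essentially step for step: unbiasedness from the factorial moments $\E\widehat p_j=p_j$, $\E\widehat p_j^{[2]}=p_j^2$ and from the conditional independence of the counts for the cross terms, admissibility of the shifted and normalized statistic as a Bernoulli probability, and the Poisson probability generating function. You also state explicitly two points the paper leaves implicit: $b_{\bm n}^{\Omega}$ and $C_{\bm n}^{\Omega}$ do not depend on $p(q)$, and each copy of $H_{\bm n,y}^{\Omega}$ must regenerate its own counts. Neither changes the argument.
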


\begin{proof}
The factorial moments give $\E\widehat p_j=p_j(q)$ and
$\E\widehat p_j^{[2]}=p_j(q)^2$, while independence of the auxiliary counts gives
$\E(\widehat p_i\widehat p_j)=p_i(q)p_j(q)$ for $i\ne j$. Thus the displayed quadratic statistic is unbiased. The shift and normalization make its affine rescaling a valid Bernoulli probability, so
\[
 \Pp(H_{\bm n,y}^{\Omega}=1)
 =\frac{b_{\bm n}^{\Omega}(y)+[p(q)-y]^\top\Omega[p(q)-y]}
 {C_{\bm n}^{\Omega}(y)}.
\]
The Poisson probability-generating function then gives the stated success probability.
\end{proof}

For large $J$, the discrete minimum need not be enumerated. With
$\delta_j=\Omega_{jj}/(n_j-1)$, define
\[
\widetilde\beta_{\bm n}^{\Omega}(z;y)
=(z-y)^\top\Omega(z-y)-\sum_{j=1}^J\delta_j z_j(1-z_j),
\qquad z\in[0,1]^J,
\]
and let
$\bar b_{\bm n}^{\Omega}(y)=[-\min_{z\in[0,1]^J}
\widetilde\beta_{\bm n}^{\Omega}(z;y)]_+$.
Because the count grid is contained in $[0,1]^J$,
$\bar b_{\bm n}^{\Omega}(y)\ge b_{\bm n}^{\Omega}(y)$, so this continuous
shift is safe. The minimization is strictly convex since its Hessian is
$2\Omega+2\operatorname{diag}(\delta_1,\ldots,\delta_J)\succ0$; moreover, for
fixed $y\in(0,1)^J$ and $n_{\min}=\min_j n_j\to\infty$,
\[
b_{\bm n}^{\Omega}(y),\ \bar b_{\bm n}^{\Omega}(y)
=\sum_{j=1}^J\frac{\Omega_{jj}y_j(1-y_j)}{n_j}
+O(n_{\min}^{-2}).
\]
The proof is given in Appendix~\ref{app:correlated-shift}.

This extension shows that the likelihood-coin mechanism is not tied to diagonal observation covariance. We do not develop a parallel sharp-work theory for the multivariate factory. The remainder of this section returns to the independent factorized architecture, for which scalar range stopping and componentwise Poisson streams expose the actual work exactly.

\subsection{Early stopping and order selection}

A direct implementation of $H_{n,y}$ draws all $n$ forward coins, but many outcomes can be decided earlier. Write $\widetilde c_{k,n}(y)=\frac{c_{k,n}(y)}{C_n(y)}\in[0,1]$,
draw $V\sim\Unif(0,1)$, and note that
$H_{n,y}=\ind_{\{V<\widetilde c_{K,n}(y)\}}$. After $s$ forward flips with $k$ successes, the final total $K$ can only lie in $\{k,\ldots,k+n-s\}$. With $\ell=n-s$, define
\[
\widetilde c_{\min}(k,\ell)
 =\min_{j=k,\ldots,k+\ell}\widetilde c_{j,n}(y),
 \qquad
 \widetilde c_{\max}(k,\ell)
 =\max_{j=k,\ldots,k+\ell}\widetilde c_{j,n}(y).
\]
If $V<\widetilde c_{\min}(k,\ell)$, the final event is already true for every possible continuation; if $V\ge\widetilde c_{\max}(k,\ell)$, it is already false.

\begin{lemma}
\label{lem:early}
The range test above has exactly the same output as first drawing all $n$ forward coins, computing $K$, and returning $\ind_{\{V<\widetilde c_{K,n}(y)\}}$. It therefore leaves the law of $H_{n,y}$ unchanged.
\end{lemma}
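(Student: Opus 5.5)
The plan is a pathwise coupling argument. Realize everything on one probability space: an iid sequence $X_1,\ldots,X_n\sim\Bern(p)$ of forward coins and an independent $V\sim\Unif(0,1)$. Both procedures are deterministic functions of $(V,X_1,\ldots,X_n)$. The full-batch procedure outputs $F:=\ind_{\{V<\widetilde c_{K,n}(y)\}}$ with $K=\sum_{i=1}^nX_i$. The range test reads $X_1,X_2,\ldots$ sequentially. After $s$ flips with partial count $k_s=\sum_{i\le s}X_i$ and $\ell=n-s$, it stops and outputs $1$ if $V<\widetilde c_{\min}(k_s,\ell)$, or $0$ if $V\ge\widetilde c_{\max}(k_s,\ell)$. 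Otherwise it continues. I will show that for every realization the two outputs agree. Equality in law, and hence the law $\Pp(H_{n,y}=1)=[b_n(y)+(p-y)^2]/C_n(y)$ from Lemma~\ref{lem:square}, then follows immediately.

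The first step is termination. At $s=n$ we have $\ell=0$, so $\widetilde c_{\min}(K,0)=\widetilde c_{\max}(K,0)=\widetilde c_{K,n}(y)$. The two stopping conditions then become $V<\widetilde c_{K,n}(y)$ and $V\ge\widetilde c_{K,n}(y)$, which are complementary. Hence the test always stops by time $n$, and if it stops exactly at $s=n$ its output is $F$.

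The second step handles stopping at some $s<n$. Whatever the unrevealed coins $X_{s+1},\ldots,X_n$ are, the final count satisfies $K=k_s+\sum_{i>s}X_i\in\{k_s,\ldots,k_s+\ell\}$. Therefore $\widetilde c_{\min}(k_s,\ell)\le\widetilde c_{K,n}(y)\le\widetilde c_{\max}(k_s,\ell)$. If the test outputs $1$ because $V<\widetilde c_{\min}$, then $V<\widetilde c_{K,n}(y)$, so $F=1$. If it outputs $0$ because $V\ge\widetilde c_{\max}$, then $V\ge\widetilde c_{K,n}(y)$, so $F=0$. The two conditions cannot hold simultaneously, since $\widetilde c_{\min}\le\widetilde c_{\max}$, so the output is well defined.

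The third step is the conclusion. The outputs coincide pointwise, so they have the same law. Moreover, stopping early never uses the unrevealed flips, so the implementation may simply not generate them. That the implementation generates the first $s$ oracle calls sequentially, rather than all $n$ at once, does not change their joint law, because the oracle calls are iid.

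There is no real obstacle. The only subtle point is to make the coupling explicit, so that "same output" is a pathwise statement. In particular, one must note that the stopping time is determined by $(V,X_1,\ldots,X_s)$, so skipping the unrevealed coins is legitimate.
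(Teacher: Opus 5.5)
Your proposal is correct and follows the same route as the paper. The paper's proof is the one-line observation that the range test stops only when $\ind_{\{V<\widetilde c_{j,n}(y)\}}$ takes the same value for every reachable final count $j\in\{k,\ldots,k+\ell\}$, which is your second step. Your explicit pathwise coupling and the termination check at $s=n$ (where $\ell=0$ makes the two stopping conditions complementary) spell out details the paper leaves implicit.
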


\begin{proof}
Early stopping occurs only when the indicator has the same value for every final success count that remains reachable. No admissible continuation can therefore change the result.
\end{proof}

The Bernstein order trades master-process inflation against the number of forward-oracle calls used by each factory event. Proposition~\ref{prop:bn} shows that the compensation is of order $n^{-1}$. The next subsection specializes to a common small-noise scale and common order in order to derive both a full-batch benchmark and the sharp work of the actual range-stopped implementation.

Combining the Bernstein likelihood coin, the range-stopped implementation,
and Poisson thinning gives the complete exact sampler summarized in
Algorithm~\ref{alg:main}. For independent Gaussian observations its output
is the posterior point process
$\eta_\gamma\sim\PPP(\gamma\nu^y)$.

\begin{algorithm}[t]
\caption{Likelihood-coin posterior PPP for independent Gaussian observations}
\label{alg:main}
\begin{algorithmic}[1]
\State Choose orders $n_j$ and compute $b_{n_j}(y_j)$ and $B_{\bm n}(y)=\sum_j a_jb_{n_j}(y_j)$.
\State Set $A_{\bm n}:=\sum_{j=1}^J a_jC_{n_j}(y_j)$ and $\pi_j:=a_jC_{n_j}(y_j)/A_{\bm n}$.
\State Draw $M_\gamma\sim\Pois(\gamma e^{B_{\bm n}(y)})$ and candidates $Q_i\stackrel{\rm iid}{\sim}\mu_0$, $i=1,\ldots,M_\gamma$.
\For{$i=1,\ldots,M_\gamma$}
 \State Draw $R\sim\Pois(A_{\bm n})$ and set $\mathrm{keep}=1$.
 \For{$h=1,\ldots,R$}
  \State Independently draw a component label $j$ with probability $\pi_j$.
  \State Generate an early-stopped Bernstein event $H_{n_j,y_j}$ from stochastic $p_j(Q_i)$-coins.
  \If{$H_{n_j,y_j}=1$}
   \State Set $\mathrm{keep}=0$ and terminate the remaining likelihood tests for candidate $i$.
  \EndIf
 \EndFor
 \If{$\mathrm{keep}=1$} \State retain $Q_i$. \EndIf
\EndFor
\State Return the retained point process.
\end{algorithmic}
\end{algorithm}

The marked Poisson stream in Algorithm~\ref{alg:main} is equivalent, before
stopping, to independent component streams with counts
$R_j\sim\Pois(a_j C_{n_j}(y_j))$. Its randomized component order preserves
the likelihood-coin law and is used in the exact work identity below.
Algorithm~\ref{alg:main} contains two distinct forms of early termination.
Within each Bernstein event, the range test of Lemma~\ref{lem:early} may decide
the event before all \(n_j\) forward coins are generated. At the candidate
level, the first rejected auxiliary event terminates all remaining likelihood
tests for that candidate. Neither mechanism changes the retained point-process
law; both affect only the amount of forward-oracle work. The choice of the
orders \(n_j\), and hence the balance between master-process inflation and
range-stopped work, is analyzed in Section~\ref{subsec:sharp-work}.

\subsection{Common-order benchmark and sharp stopped-work complexity}
\label{subsec:sharp-work}

Algorithm~\ref{alg:main} saves work in two different ways. Within a single
Bernstein event, the range rule may determine the event before all forward
coins are revealed. At the candidate level, the first rejecting auxiliary
event terminates all remaining likelihood tests for that candidate. The goal
of this subsection is to quantify these two savings and to identify the
small-noise regime in which each one matters.

To keep the complexity analysis focused on the stopping mechanism rather than
on componentwise allocation, we specialize to a common observational scale
$\sigma_1=\cdots=\sigma_J=\sigma$, write
$a_\sigma=(2\sigma^2)^{-1}$, and use the same Bernstein order $n$ for every
component. The scaled order $\vartheta_\sigma=n/a_\sigma$ compares the Bernstein order
with the inverse-noise scale. We also set
\[
Q_y(q):=\sum_{j=1}^J[p_j(q)-y_j]^2,\qquad
b_n^{\mathrm{tot}}(y):=\sum_{j=1}^J b_n(y_j),\qquad
D_y:=\sum_{j=1}^J y_j(1-y_j).
\]
Here $Q_y(q)$ is the predictive squared misfit. The quantity
$b_n^{\mathrm{tot}}(y)$ is the total deterministic compensation introduced by the
Bernstein factories, and therefore controls the inflation of the master PPP.
By Proposition~\ref{prop:bn},
\begin{equation}
 b_n^{\mathrm{tot}}(y)=\frac{D_y}{n}+O(n^{-2}).
 \label{eq:sum-b-asymptotic}
\end{equation}

We begin with a benchmark in which both stopping mechanisms are disabled. In
this full-batch implementation every auxiliary event uses all $n$ forward
coins, and every Poisson event is processed even if the candidate could
already have been rejected.

\begin{proposition}
\label{prop:full-batch-common}
Assume $y\in(0,1)^J$ and define the scaled Bernstein order
$\vartheta_\sigma:=n/a_\sigma$. Suppose that
$\vartheta_\sigma\to\vartheta\in(0,\infty)$ as $\sigma\downarrow0$.
Equivalently, $n=n(\sigma)\sim\vartheta a_\sigma$, so $\vartheta$
measures the asymptotic Bernstein order per unit small-noise scale. If
$W_{\rm full}$ denotes the total number of forward Bernoulli calls in the
full-batch implementation, then
\[
\frac{\E W_{\rm full}}{\gamma a_\sigma^2}
 \longrightarrow J\vartheta\exp\!\left(\frac{D_y}{\vartheta}\right).
\]
Among orders with $n/a_\sigma\to\vartheta\in(0,\infty)$, the limit is
minimized at $\vartheta=D_y$. Hence the minimizing full-batch scale satisfies
\[
n\sim a_\sigma D_y,\qquad
a_\sigma b_n^{\mathrm{tot}}(y)\to1,
\qquad e^{a_\sigma b_n^{\mathrm{tot}}(y)}\to e.
\]
\end{proposition}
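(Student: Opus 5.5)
Since both stopping mechanisms are disabled, the expected work factorizes exactly and the whole argument reduces to an asymptotic evaluation followed by a one-variable minimization. In the common-order specialization of Algorithm~\ref{alg:main}, the master process has $\E M_\gamma=\gamma e^{a_\sigma b_n^{\mathrm{tot}}(y)}$ candidates. Appendix~\ref{app:bernstein-compensation} gives $C_n(y_j)=1$, so $A_{\bm n}=Ja_\sigma$. In the full-batch implementation each candidate processes all $R\sim\Pois(Ja_\sigma)$ auxiliary events, and each event consumes exactly $n$ forward coins. The work attached to a candidate therefore does not depend on its location $Q_i$. By Wald's identity, or simply by conditioning on $M_\gamma$ and then on $R$,
\[
\E W_{\rm full}=\gamma\, e^{a_\sigma b_n^{\mathrm{tot}}(y)}\cdot Ja_\sigma\cdot n .
\]
This identity is exact for every $\sigma$, with no limit taken yet.

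Dividing by $\gamma a_\sigma^2$ gives $J\,(n/a_\sigma)\,e^{a_\sigma b_n^{\mathrm{tot}}(y)}=J\vartheta_\sigma e^{a_\sigma b_n^{\mathrm{tot}}(y)}$. The only nontrivial step is the limit of the exponent, and I will take it from Proposition~\ref{prop:bn}. Summing \eqref{eq:bnbounds} over $j$ gives
\[
\frac{a_\sigma D_y}{n}\le a_\sigma b_n^{\mathrm{tot}}(y)\le \frac{a_\sigma D_y}{n}+\frac{Ja_\sigma}{4n(n-1)}.
\]
Since $n\sim\vartheta a_\sigma\to\infty$, the first term tends to $D_y/\vartheta$. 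The error term is $O(a_\sigma/n^2)=O(1/a_\sigma)\to0$. This is where the scaling assumption matters: the $O(n^{-2})$ remainder in \eqref{eq:sum-b-asymptotic} is multiplied by $a_\sigma$, and it vanishes only because $n$ grows at least linearly in $a_\sigma$. Continuity of the exponential then gives the stated limit $J\vartheta\exp(D_y/\vartheta)$. The hypothesis $y\in(0,1)^J$ ensures $D_y>0$, so the exponent is nondegenerate.

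For the minimization, set $f(\vartheta)=\vartheta e^{D_y/\vartheta}$ on $(0,\infty)$. Then $f'(\vartheta)=e^{D_y/\vartheta}(1-D_y/\vartheta)$, which is negative for $\vartheta<D_y$ and positive for $\vartheta>D_y$. Also $f\to\infty$ at both ends of $(0,\infty)$. Hence $f$ has its unique minimum at $\vartheta=D_y$, with $f(D_y)=eD_y$. Along this scale $n\sim a_\sigma D_y$, and the same sandwich bound gives $a_\sigma b_n^{\mathrm{tot}}(y)\to D_y/D_y=1$, hence $e^{a_\sigma b_n^{\mathrm{tot}}(y)}\to e$.

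I do not expect a real obstacle here. The only point needing care is stating precisely the model of the full-batch work: the work counts only forward Bernoulli calls, and it includes rejected candidates and all $R$ events.
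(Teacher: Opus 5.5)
Your proposal is correct and follows essentially the same route as the paper. Both use the exact identity $\E W_{\rm full}=\gamma e^{a_\sigma b_n^{\mathrm{tot}}(y)}Ja_\sigma n$ (from $C_n(y_j)=1$), then the compensation asymptotics and the minimization of $\vartheta e^{D_y/\vartheta}$; your explicit sandwich from \eqref{eq:bnbounds}, showing that the $O(n^{-2})$ remainder in \eqref{eq:sum-b-asymptotic} contributes only $O(a_\sigma^{-1})$ to the exponent, just spells out a step the paper leaves implicit.
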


\begin{proof}
The master PPP has mean size $\gamma e^{a_\sigma b_n^{\mathrm{tot}}(y)}$.
Because $C_n(y_j)=1$, each of the $J$ component factories has a
$\Pois(a_\sigma)$ auxiliary count, and each completed event costs exactly $n$
forward calls. Therefore
$\E W_{\rm full}
 =\gamma e^{a_\sigma b_n^{\mathrm{tot}}(y)}J a_\sigma n$.
Equation~\eqref{eq:sum-b-asymptotic} and $\vartheta_\sigma\to\vartheta$ give the stated
limit. Minimizing $\vartheta e^{D_y/\vartheta}$ yields $\vartheta=D_y$.
\end{proof}

Proposition~\ref{prop:full-batch-common} identifies the full-batch benchmark:
without early stopping, the natural complexity scale is
$\gamma a_\sigma^2=\Theta(\gamma\sigma^{-4})$. We next quantify the first
saving mechanism: stopping within a single Bernstein event.

For a scalar component with forward probability $p\in[0,1]$, let
$\tau_{n,y}(p)$ be the number of $p$-coin calls used by the range rule in
Lemma~\ref{lem:early}, and let $\bar\tau_{n,y}(p)=\E\tau_{n,y}(p)$. Suppose
that a fraction $t$ of the $n$ forward coins has been revealed. For large $n$, the revealed successes contribute approximately
$tp$ to the final normalized count, while the unrevealed fraction $1-t$ may
still contribute anything between $0$ and $1-t$. This motivates the interval
$I_t(p):=[tp,tp+1-t]$. Define
\[
\kappa_y(p):=\int_0^1\left(
\sup_{z\in I_t(p)}(z-y)^2-
\inf_{z\in I_t(p)}(z-y)^2\right)\,\mathrm dt.
\]
The integrand measures how much uncertainty about the final normalized count
can still change the squared residual after a fraction $t$ of the forward
coins has been revealed. Accordingly, $\kappa_y(p)$ will be the asymptotic
fraction of the $n$ calls that remain necessary under the range rule.

\begin{proposition}
\label{prop:scalar-range-cost}
For every fixed $y\in[0,1]$,
\begin{equation}
 \sup_{p\in[0,1]}
 \left|\frac{\bar\tau_{n,y}(p)}{n}-\kappa_y(p)\right|\longrightarrow0.
 \label{eq:scalar-event-cost-limit}
\end{equation}
Moreover, $\kappa_y$ is continuous and $1/12\le\kappa_y(p)\le1$ for all
$p\in[0,1]$.
\end{proposition}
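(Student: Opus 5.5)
The plan is to write the expected stopping time of the range rule of Lemma~\ref{lem:early} as a tail sum, pass to a Riemann sum using the law of large numbers for the partial success counts, and then prove the bounds on $\kappa_y$ separately. Throughout I use $C_n(y)=1$ (Appendix~\ref{app:bernstein-compensation}), so $\widetilde c_{j,n}=c_{j,n}=\beta_{j,n}(y)+b_n(y)$.

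\emph{Exact tail-sum formula.} Let $K_s$ be the number of successes among the first $s$ forward coins, and let $\ell=n-s$. The rule has not stopped after $s$ flips exactly when $V\in[\widetilde c_{\min}(K_s,\ell),\widetilde c_{\max}(K_s,\ell))$. Since $V$ is uniform and independent of the coins,
\[
\bar\tau_{n,y}(p)=\sum_{s=0}^{n-1}\Pp(\tau>s)=\sum_{s=0}^{n-1}\E\bigl[\widetilde c_{\max}(K_s,n-s)-\widetilde c_{\min}(K_s,n-s)\bigr].
\]
The shift $b_n(y)$ cancels in each difference. So each summand is the oscillation of $j\mapsto\beta_{j,n}(y)$ over the reachable set $\{K_s,\dots,K_s+n-s\}$.

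\emph{Replacing the discrete oscillation by a continuous one.} For $0\le j\le n$,
\[
\beta_{j,n}(y)=(j/n-y)^2+\Bigl[\tfrac{j(j-1)}{n(n-1)}-\tfrac{j^2}{n^2}\Bigr],
\]
and the bracket is $O(1/n)$ uniformly in $j$ and $y$. The grid $\{j/n\}$ has spacing $1/n$, and $(z-y)^2$ is $2$-Lipschitz on $[0,1]$. Hence, uniformly in $k,\ell$,
\[
\text{osc}_{j\in\{k,\dots,k+\ell\}}\beta_{j,n}(y)=g\bigl(k/n,\ell/n\bigr)+O(1/n),
\qquad
g(x,L):=\sup_{z\in[x,x+L]}(z-y)^2-\inf_{z\in[x,x+L]}(z-y)^2 .
\]
The function $g$ is Lipschitz in $(x,L)$ with an absolute constant, since the sup and inf of a Lipschitz function over an interval move Lipschitzly with its endpoints.

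\emph{Law of large numbers and Riemann sum.} Since $K_s\sim\Bin(s,p)$,
\[
\E|K_s/n-(s/n)p|\le\sqrt{s p(1-p)}/n\le 1/(2\sqrt n)
\]
uniformly in $p$ and $s$. By the Lipschitz property,
\[
\frac{\bar\tau_{n,y}(p)}{n}=\frac1n\sum_{s=0}^{n-1}g\bigl(\tfrac sn p,\,1-\tfrac sn\bigr)+O(n^{-1/2}).
\]
The summand is $h_p(s/n)$ with $h_p(t)=g(tp,1-t)$, which is the integrand of $\kappa_y(p)$ because $I_t(p)=[tp,tp+1-t]$. Moreover $h_p$ is Lipschitz in $t$ uniformly in $p$. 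So the Riemann-sum error is $O(1/n)$ uniformly in $p$, and \eqref{eq:scalar-event-cost-limit} follows.

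The main obstacle is this step: all error terms must be uniform in $p\in[0,1]$. The explicit binomial variance bound and the global Lipschitz constant of $g$ are what make this work; at the endpoints $p\in\{0,1\}$ the bound is in fact exact.

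\emph{Continuity of $\kappa_y$.} The integrand $g(tp,1-t)$ is Lipschitz in $p$ uniformly in $t$, so $\kappa_y$ is Lipschitz, hence continuous.

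\emph{Upper bound.} Since $(z-y)^2\in[0,1]$ for $z,y\in[0,1]$, the integrand is at most $1$, giving $\kappa_y(p)\le1$.

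\emph{Lower bound.} Over any interval of length $L$, the oscillation of $(z-y)^2$ is at least $L^2/4$. This minimum is attained when $y$ is the midpoint of the interval, and otherwise the oscillation is larger by monotonicity and convexity. Taking $L=1-t$ gives
\[
\kappa_y(p)\ge\int_0^1\frac{(1-t)^2}{4}\,\mathrm dt=\frac1{12}.
\]
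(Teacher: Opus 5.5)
Your proof is correct and follows the paper's route: a tail-sum formula for $\tau_{n,y}(p)$ in which the shift $b_n(y)$ cancels, uniform-in-$p$ approximation of the reachable range by $I_{s/n}(p)$, a Riemann-sum limit, and the $(1-t)^2/4$ oscillation bound giving $1/12$. Where the paper invokes a maximal inequality, you use the fixed-time bound $\E|K_s-sp|\le\sqrt n/2$, which suffices because only fixed-time marginals enter the tail sum; together with your explicit Lipschitz bookkeeping this gives a uniform $O(n^{-1/2})$ rate that the paper leaves implicit.
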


\begin{proof}
After $m$ forward calls with $K_m$ successes, the final normalized success
count can only lie in
\[
\left[\frac{K_m}{n},\frac{K_m+n-m}{n}\right].
\]
A maximal inequality for centered Bernoulli partial sums gives
$\sup_{p\in[0,1]}\E\max_{m\le n}|K_m-mp|/n=O(n^{-1/2})$.
Thus the reachable interval approaches $I_{m/n}(p)$ uniformly in $p$
and normalized time. The normalized threshold
function is $(z-y)^2+o(1)$ uniformly on $[0,1]$; the deterministic
compensation is only an additive shift and therefore does not change the
width of its reachable range. Conditional on the currently revealed coins,
the auxiliary uniform mark remains unresolved precisely when it falls inside
that threshold range. The tail-sum formula for $\tau_{n,y}(p)$ and a
Riemann-sum argument then give \eqref{eq:scalar-event-cost-limit}. Continuity
follows from the definition. Since $(z-y)^2\in[0,1]$, the upper bound is
immediate. Any interval of length $1-t$ has squared-residual range at least
$(1-t)^2/4$; integrating over $t\in[0,1]$ gives the lower bound $1/12$.
\end{proof}

For the vector likelihood write
\[
\bar\tau_n^{\mathrm{tot}}(q):=
\sum_{j=1}^J\bar\tau_{n,y_j}(p_j(q)),\qquad
\kappa_y^{\mathrm{tot}}(p):=
\sum_{j=1}^J\kappa_{y_j}(p_j).
\]
The previous proposition describes the cost of one auxiliary event. The
second, and more important, saving comes from terminating a candidate as soon
as the first auxiliary event rejects it. The next identity combines both
levels of stopping.

\begin{theorem}[Exact mean actual-work identity]
\label{thm:actual-work-identity}
Let $W_{\rm act}$ be the total number of forward Bernoulli calls made by
Algorithm~\ref{alg:main} with common order $n$, exact scalar range stopping,
and candidate termination at the first auxiliary event equal to one. Then
\begin{equation}
 \E W_{\rm act}
 =\gamma e^{a_\sigma b_n^{\mathrm{tot}}(y)}
 \int_{\X}\bar\tau_n^{\mathrm{tot}}(q)
 \frac{1-e^{-a_\sigma[b_n^{\mathrm{tot}}(y)+Q_y(q)]}}
 {b_n^{\mathrm{tot}}(y)+Q_y(q)}\,\mu_0(\mathrm dq).
 \label{eq:actual-work-identity}
\end{equation}
\end{theorem}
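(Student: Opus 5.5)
The plan is to condition first on the master process and then average over it with Campbell's formula (equivalently, Wald's identity for the Poisson number of candidates). By \eqref{eq:finite-ppp-iid}, the master process of Corollary~\ref{cor:gaussian-ppp} has $M_\gamma\sim\Pois(\gamma e^{a_\sigma b_n^{\mathrm{tot}}(y)})$ candidates $Q_i\stackrel{\rm iid}{\sim}\mu_0$. The work of the candidates is generated with independent auxiliary randomness. Hence $\E W_{\rm act}=\gamma e^{a_\sigma b_n^{\mathrm{tot}}(y)}\int_\X w(q)\,\mu_0(\mathrm dq)$, where $w(q)$ is the expected number of forward calls spent on one candidate located at $q$. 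It therefore suffices to show
\[
w(q)=\bar\tau_n^{\mathrm{tot}}(q)\,\frac{1-e^{-a_\sigma[b_n^{\mathrm{tot}}(y)+Q_y(q)]}}{b_n^{\mathrm{tot}}(y)+Q_y(q)}.
\]

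For a fixed candidate I will use the structure of Algorithm~\ref{alg:main} with common scale and order. Since $C_n(y_j)=1$ (Appendix~\ref{app:bernstein-compensation}), we have $A_{\bm n}=Ja_\sigma$ and $\pi_j=1/J$. The candidate therefore sees $R\sim\Pois(Ja_\sigma)$ auxiliary events. Each event $h$ carries an independent label $j_h$, uniform on $\{1,\ldots,J\}$, and a range-stopped Bernstein event with cost $\tau^{(h)}$ and outcome $H^{(h)}$. The pairs $(j_h,\tau^{(h)},H^{(h)})$ are iid across $h$ and independent of $R$. For one such event, $\E\tau^{(h)}=J^{-1}\sum_j\bar\tau_{n,y_j}(p_j(q))=\bar\tau_n^{\mathrm{tot}}(q)/J$. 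By Lemma~\ref{lem:early}, the early stopping does not change the law of $H_{n,y}$, so
\[
\Pp(H^{(h)}=1)=J^{-1}\sum_j[b_n(y_j)+(p_j-y_j)^2]=:\rho=\frac{b_n^{\mathrm{tot}}(y)+Q_y(q)}{J}.
\]

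The total work for the candidate is $\sum_{h\ge1}\tau^{(h)}\ind\{R\ge h,\ H^{(1)}=\cdots=H^{(h-1)}=0\}$. The indicator depends only on $R$ and on events $1,\ldots,h-1$, so it is independent of $\tau^{(h)}$. This gives
\[
w(q)=\frac{\bar\tau_n^{\mathrm{tot}}}{J}\sum_{h\ge1}\Pp(R\ge h)(1-\rho)^{h-1}
=\frac{\bar\tau_n^{\mathrm{tot}}}{J}\,\E\!\left[\frac{1-(1-\rho)^R}{\rho}\right].
\]
Applying the Poisson generating function as in Theorem~\ref{thm:onefactory} gives $\E(1-\rho)^R=e^{-Ja_\sigma\rho}$. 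The factors of $J$ cancel and the displayed formula for $w(q)$ follows. The case $\rho=0$ is handled by continuity, or directly: it gives $\E R=Ja_\sigma$.

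The main obstacle is justifying this independence step rigorously. The cost $\tau^{(h)}$ and the outcome $H^{(h)}$ of the same event are strongly correlated, because range stopping resolves accept and reject at different speeds. The argument must therefore use only that event $h$'s cost is independent of the decision to process it, never the joint law of its cost and its outcome. I would write this as a filtration argument: the processing indicator is measurable with respect to the randomness of $R$ and of the earlier events. I would also check the Fubini and Tonelli interchange, which is legitimate because all terms are nonnegative and $\bar\tau\le n$.
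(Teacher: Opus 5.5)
Your proposal is correct and follows the paper's proof: averaging over the master PPP, the $\Pois(Ja_\sigma)$ marked stream with uniform labels, the expected number of started events $[1-(1-\rho)^r]/\rho$ averaged via the Poisson generating function, and multiplication by the per-event mean cost $\bar\tau_n^{\mathrm{tot}}(q)/J$ so that $J$ cancels. Your filtration remark shows that the indicator ``event $h$ is started'' is independent of $\tau^{(h)}$, even though $\tau^{(h)}$ is correlated with $H^{(h)}$; this is precisely the justification for the paper's terse step ``multiplying by the mean cost of a started event.''
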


\begin{proof}
In the common-order setting $C_n(y_j)=1$, so the marked stream in
Algorithm~\ref{alg:main} has $R\sim\Pois(Ja_\sigma)$ events and independent
uniform component labels in $\{1,\ldots,J\}$. For fixed $q$,
a randomly labelled event rejects with probability
$h_n(q)=[b_n^{\mathrm{tot}}(y)+Q_y(q)]/J$, while its mean forward-call cost is
$J^{-1}\bar\tau_n^{\mathrm{tot}}(q)$. Conditional on $R=r$, the expected number of
events that are started before the first rejection is
$[1-(1-h_n(q))^r]/h_n(q)$. Averaging over the Poisson count gives
\[
J\,\frac{1-e^{-a_\sigma[b_n^{\mathrm{tot}}(y)+Q_y(q)]}}
 {b_n^{\mathrm{tot}}(y)+Q_y(q)}.
\]
Multiplying by the mean cost of a started event cancels the factor $J$.
Finally, averaging over the master PPP gives
\eqref{eq:actual-work-identity}.
\end{proof}

The identity separates the two sources of computational cost and makes the small-noise mechanism transparent. Apart from the
master-process inflation and the cost of one started event, the decisive factor
is
\[
g_{a_\sigma}(s):=\frac{1-e^{-a_\sigma s}}{s},\qquad s>0.
\]
If $a_\sigma s\ll1$, then $g_{a_\sigma}(s)\sim a_\sigma$; if
$a_\sigma s\gg1$, then $g_{a_\sigma}(s)\sim s^{-1}$. Thus a parameter value
with large predictive misfit $Q_y(q)$ is rejected after only a few auxiliary
events, whereas a nearly exact fit with $Q_y(q)\approx0$ survives much longer.
The total work is therefore determined by how much prior mass lies near the
exact-fit set $\{q:Q_y(q)=0\}$.

We encode this local geometry through a small-ball condition.

\begin{assumption}[Prior-predictive small-ball law]
\label{ass:small-ball}
For the fixed observation vector $y$, there exist $c_y>0$ and $\alpha>0$ such
that
\[
F_y(t):=\mu_0\{q:Q_y(q)\le t\}
 =c_y t^\alpha\{1+o(1)\},\qquad t\downarrow0.
\]
\end{assumption}

The exponent $\alpha$ measures the local amount of prior mass that can produce
an almost exact fit. If the prior predictive law of $p(q)$ has a
nondegenerate $d_{\mathrm{pred}}$-dimensional density near $y$, then
$Q_y(q)=\|p(q)-y\|_2^2$ and a ball of squared radius $t$ has mass of order
$t^{d_{\mathrm{pred}}/2}$. Hence $\alpha=d_{\mathrm{pred}}/2$, so
$d_{\mathrm{pred}}=2\alpha$ is the effective local predictive dimension. This dimension concerns the image of
the prior under the observation map and need not agree with the dimension of
the unknown $q$.

For $0<\alpha<1$, define
\begin{equation}
 \Psi_\alpha(\beta)
 :=\alpha\int_0^\infty t^{\alpha-1}
 \frac{1-e^{-(\beta+t)}}{\beta+t}\,\mathrm dt
 =\Gamma(\alpha+1)\int_0^1e^{-\beta s}s^{-\alpha}\,\mathrm ds.
 \label{eq:Psi-alpha}
\end{equation}

\begin{theorem}[Sharp small-noise complexity]
\label{thm:sharp-actual-work}
Assume $y\in(0,1)^J$, Assumption~\ref{ass:small-ball}, and
$\vartheta_\sigma=n/a_\sigma\to\vartheta>0$. Put $\beta=D_y/\vartheta$. Then
\begin{equation}
 \E W_{\rm act}\sim
 \begin{cases}
 \gamma e^\beta \vartheta c_y\kappa_y^{\mathrm{tot}}(y)
 \Psi_\alpha(\beta)a_\sigma^{2-\alpha},&0<\alpha<1,\\[1.1ex]
 \gamma e^\beta \vartheta c_y\kappa_y^{\mathrm{tot}}(y)
 a_\sigma\log a_\sigma,&\alpha=1,\\[1.1ex]
 \gamma e^\beta \vartheta\mathcal I_y a_\sigma,&\alpha>1,
 \end{cases}
 \label{eq:sharp-work-rates}
\end{equation}
where
$\mathcal I_y:=
\int_{\X}\frac{\kappa_y^{\mathrm{tot}}(p(q))}{Q_y(q)}\,\mu_0(\mathrm dq)<\infty$, $(\alpha>1)$.
Consequently,
\[
\E W_{\rm act}=
 \begin{cases}
 \Theta(\gamma\sigma^{-4+2\alpha}),&0<\alpha<1,\\
 \Theta(\gamma\sigma^{-2}\log(1/\sigma)),&\alpha=1,\\
 \Theta(\gamma\sigma^{-2}),&\alpha>1.
 \end{cases}
\]
\end{theorem}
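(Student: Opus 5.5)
The plan is to start from the exact identity \eqref{eq:actual-work-identity} of Theorem~\ref{thm:actual-work-identity} and analyse its three factors separately as $\sigma\downarrow0$.

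\textbf{Step 1: inflation and cost per event.} With $n\sim\vartheta a_\sigma$, equation \eqref{eq:sum-b-asymptotic} gives
\[
a_\sigma b_n^{\mathrm{tot}}(y)=a_\sigma D_y/n+O(a_\sigma n^{-2})\to D_y/\vartheta=\beta,
\]
so the prefactor satisfies $e^{a_\sigma b_n^{\mathrm{tot}}}\to e^\beta$. Proposition~\ref{prop:scalar-range-cost}, which holds uniformly in $p$, gives
\[
\bar\tau_n^{\mathrm{tot}}(q)=n\bigl(\kappa_y^{\mathrm{tot}}(p(q))+o(1)\bigr)
\]
uniformly in $q$. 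Hence $\bar\tau_n^{\mathrm{tot}}(q)\sim\vartheta a_\sigma\kappa_y^{\mathrm{tot}}(p(q))$. Moreover $\kappa^{\mathrm{tot}}$ is bounded between $J/12$ and $J$, so the $o(1)$ error is a relative error that can be pulled out of the integral. It therefore remains to find the asymptotics of
\[
I_\sigma:=\int\kappa^{\mathrm{tot}}(p(q))\,g_{a_\sigma}\bigl(b+Q_y(q)\bigr)\,\mu_0(\mathrm dq),\qquad b:=b_n^{\mathrm{tot}}(y)\sim\beta/a_\sigma .
\]

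\textbf{Step 2: the case $\alpha>1$.} Since $g_a(s)\le 1/s$ and $g_a(b+Q)\to 1/Q$ pointwise on $\{Q>0\}$, dominated convergence applies with dominating function $\kappa^{\mathrm{tot}}/Q$. This function is integrable because
\[
\int Q^{-1}\,d\mu_0=\int_0^\infty t^{-2}F_y(t)\,dt<\infty
\]
when $\alpha>1$: near $0$ use $F_y(t)\lesssim t^\alpha$, and since $Q\le J$ the large-$t$ part is harmless. This gives $I_\sigma\to\mathcal I_y$, and so $\E W_{\rm act}\sim\gamma e^\beta\vartheta a_\sigma\mathcal I_y$.

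\textbf{Step 3: the cases $\alpha\le1$.} Here the mass near $Q=0$ dominates. Split the integral at $Q\le\epsilon$ and $Q>\epsilon$.

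On $\{Q\le\epsilon\}$, continuity of $\kappa$ together with $Q_y(q)\le\epsilon$ forces $p(q)$ to be close to $y$, so $\kappa^{\mathrm{tot}}(p(q))=\kappa^{\mathrm{tot}}_y(y)+o_\epsilon(1)$. Write the remaining integral as a Stieltjes integral $\int_0^\epsilon g_{a}(b+t)\,dF_y(t)$. Substitute $t=u/a_\sigma$ and use $b a_\sigma\to\beta$. For $\alpha<1$, the identity
\[
g_a\bigl((\beta+u)/a\bigr)=a\,\frac{1-e^{-(\beta+u)}}{\beta+u}
\]
combines with $dF_y\approx c_y\alpha t^{\alpha-1}dt$ to give
\[
a_\sigma^{1-\alpha}c_y\Psi_\alpha(\beta)\bigl(1+o(1)\bigr).
\]
The integral $\Psi_\alpha$ converges at infinity since $u^{\alpha-2}$ is integrable when $\alpha<1$. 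To justify replacing $F_y$ by its asymptotic form, integrate by parts so that $F_y$ rather than $dF_y$ appears, i.e.\ write the integral as $-\int F_y(t)\,\partial_t g(b+t)\,dt$ plus a boundary term. Then apply the small-ball law $F_y(t)=c_yt^\alpha(1+o(1))$ uniformly on $t\le\epsilon$, and dominated convergence in $u$.

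For $\alpha=1$ the $u$-integral diverges logarithmically. The region $1/a_\sigma\ll t\le\epsilon$ contributes $c_y\int t^{-1}\,dt\sim c_y\log a_\sigma$, while the region $t\lesssim 1/a_\sigma$ contributes only $O(1)$.

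On $\{Q>\epsilon\}$ the integrand is at most $J/\epsilon$, so this part is $O(1)$ and hence negligible against $a_\sigma^{1-\alpha}$ or $\log a_\sigma$. Letting $\epsilon\downarrow0$ afterwards completes these cases, and the $\Theta$ rates follow from $a_\sigma\asymp\sigma^{-2}$.

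\textbf{Main obstacle.} I expect the main difficulty to be the Tauberian-type transfer in Step~3: passing from the distribution-function asymptotics in Assumption~\ref{ass:small-ball} to the integral against the $a_\sigma$-dependent kernel, uniformly in the presence of the shift $b\asymp 1/a_\sigma$. The integration by parts with monotone kernels handles this. The first thing to try is to bound $F_y(t)$ between $(1\pm\delta)c_yt^\alpha$ on $[0,\epsilon]$ and use that $-\partial_t g_a(b+t)\ge0$.
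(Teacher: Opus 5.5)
Your proposal is correct and follows essentially the same route as the paper. You start from the exact identity \eqref{eq:actual-work-identity}, use \eqref{eq:sum-b-asymptotic} and Proposition~\ref{prop:scalar-range-cost} to extract the factor $e^\beta$ and the per-event cost $\vartheta a_\sigma\kappa_y^{\mathrm{tot}}$, and then apply the regular-variation analysis of Appendix~\ref{app:small-ball} in the three regimes. The only difference is that you carry out the Abelian step by hand, integrating by parts against the monotone kernel $g_{a_\sigma}$ and sandwiching $F_y$ between $(1\pm\delta)c_yt^\alpha$, whereas the paper cites Karamata's theorem for Stieltjes transforms; both are valid.
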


\begin{proof}
Equation~\eqref{eq:sum-b-asymptotic} gives
$a_\sigma b_n^{\mathrm{tot}}(y)\to\beta$, while
Proposition~\ref{prop:scalar-range-cost} gives uniformly
\[
\bar\tau_n^{\mathrm{tot}}(q)
=a_\sigma\vartheta\{\kappa_y^{\mathrm{tot}}(p(q))+o(1)\}.
\]
Near the exact-fit set, $Q_y(q)\to0$ implies $p(q)\to y$, so the event-level
cost converges to $a_\sigma\vartheta\kappa_y^{\mathrm{tot}}(y)$. Applying
the regular-variation lemma in Appendix~\ref{app:small-ball} to the remaining factor in
\eqref{eq:actual-work-identity} yields the three regimes. Finally,
$a_\sigma=(2\sigma^2)^{-1}$.
\end{proof}

The theorem shows precisely what early candidate rejection changes. The
full-batch benchmark is always of order $\gamma\sigma^{-4}$. With exact
stopping, the rate is governed by the local predictive exponent. In
particular, if $d_{\mathrm{pred}}=2\alpha$, then
\[
d_{\mathrm{pred}}=1:\ \Theta(\gamma\sigma^{-3}),\qquad
d_{\mathrm{pred}}=2:\ \Theta(\gamma\sigma^{-2}\log(1/\sigma)),\qquad
d_{\mathrm{pred}}>2:\ \Theta(\gamma\sigma^{-2}).
\]
The transition at $d_{\mathrm{pred}}=2$ comes from the borderline integrability of
$Q_y(q)^{-1}$ near the exact-fit set.

For fixed $\gamma$, however, the expected number of retained posterior atoms
also decreases as the noise level shrinks. Indeed, the same small-ball law
gives
\[
Z_\sigma(y):=\int e^{-a_\sigma Q_y(q)}\,\mu_0(\mathrm dq)
 \sim c_y\Gamma(\alpha+1)a_\sigma^{-\alpha},
\]
so $\E N_\gamma=\gamma Z_\sigma(y)$. Dividing the total work by the expected
retained count gives the cost per posterior atom.

\begin{corollary}
\label{cor:work-per-atom}
Under Assumption~\ref{ass:small-ball},
\[
\frac{\E W_{\rm act}}{\E N_\gamma}=
 \begin{cases}
 \Theta(a_\sigma^2),&0<\alpha<1,\\
 \Theta(a_\sigma^2\log a_\sigma),&\alpha=1,\\
 \Theta(a_\sigma^{\alpha+1}),&\alpha>1.
 \end{cases}
\]
Equivalently, in terms of $d_{\mathrm{pred}}=2\alpha$, the cost per expected posterior atom is
$\Theta(\sigma^{-4})$ for $d_{\mathrm{pred}}=1$,
$\Theta(\sigma^{-4}\log(1/\sigma))$ for $d_{\mathrm{pred}}=2$, and
$\Theta(\sigma^{-(d_{\mathrm{pred}}+2)})$ for $d_{\mathrm{pred}}>2$.
\end{corollary}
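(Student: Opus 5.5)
The proof combines two earlier asymptotics: the work rates of Theorem~\ref{thm:sharp-actual-work} in the numerator, and the asymptotics of the evidence $Z_\sigma(y)$ in the denominator. The standing hypotheses of Theorem~\ref{thm:sharp-actual-work} ($y\in(0,1)^J$ and $n/a_\sigma\to\vartheta>0$) are implicitly in force.

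\textbf{The denominator.} By Theorem~\ref{thm:coin-ppp}, applied through Corollary~\ref{cor:gaussian-ppp}, the expected retained count is $\E N_\gamma=\gamma Z_\sigma(y)$. The first step is to justify $Z_\sigma(y)\sim c_y\Gamma(\alpha+1)a_\sigma^{-\alpha}$. Write $Z_\sigma(y)=\int_0^\infty e^{-a_\sigma t}\,\mathrm dF_y(t)$ as a Laplace--Stieltjes transform. Integration by parts gives $a_\sigma\int_0^\infty e^{-a_\sigma t}F_y(t)\,\mathrm dt$. Substituting $t=u/a_\sigma$ and using Assumption~\ref{ass:small-ball}, $F_y(u/a_\sigma)a_\sigma^{\alpha}\to c_yu^\alpha$ for each fixed $u$. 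Dominated convergence then yields $c_y\int_0^\infty e^{-u}u^\alpha\,\mathrm du=c_y\Gamma(\alpha+1)$. The dominating function comes from a bound $F_y(t)\le Ct^\alpha$ on $(0,t_0]$ together with $F_y\le1$ elsewhere, whose contribution is exponentially small. This is the Abelian direction of the regular-variation lemma in Appendix~\ref{app:small-ball}, so that lemma can be cited if it covers this case.

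\textbf{The numerator and the division.} Theorem~\ref{thm:sharp-actual-work} supplies the numerator as a constant times $a_\sigma^{2-\alpha}$, $a_\sigma\log a_\sigma$, or $a_\sigma$, with strictly positive constants. Positivity of these constants needs a short check:
\begin{itemize}[nosep]
\item $e^\beta\vartheta c_y>0$;
\item $\kappa^{\mathrm{tot}}_y(y)\ge J/12>0$ by Proposition~\ref{prop:scalar-range-cost};
\item $\Psi_\alpha(\beta)>0$;
\item $\mathcal I_y>0$, because $\kappa\ge1/12$ and $Q_y>0$ on a set of positive prior mass.
\end{itemize}
Dividing by $\gamma c_y\Gamma(\alpha+1)a_\sigma^{-\alpha}$, the factor $\gamma$ cancels, giving $a_\sigma^2$, $a_\sigma^{1+\alpha}\log a_\sigma=a_\sigma^2\log a_\sigma$ (since $\alpha=1$), and $a_\sigma^{1+\alpha}$ respectively. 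Because the limits are exact asymptotic equivalences with positive constants, the $\Theta$ statements follow.

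\textbf{Translation to $\sigma$.} Substituting $a_\sigma=(2\sigma^2)^{-1}$ and $\alpha=d_{\mathrm{pred}}/2$ gives the equivalent forms:
\begin{itemize}[nosep]
\item $a_\sigma^2=\Theta(\sigma^{-4})$;
\item $\log a_\sigma=\Theta(\log(1/\sigma))$;
\item $a_\sigma^{\alpha+1}=\Theta(\sigma^{-(d_{\mathrm{pred}}+2)})$.
\end{itemize}
The case $d_{\mathrm{pred}}=1$ is the instance $\alpha=1/2$ of the first regime.

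The only delicate step is the domination needed in the Laplace asymptotics for $Z_\sigma$. Everything else is bookkeeping on top of Theorem~\ref{thm:sharp-actual-work}.
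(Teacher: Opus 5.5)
Your proposal is correct and follows the paper's own route: obtain $Z_\sigma(y)\sim c_y\Gamma(\alpha+1)a_\sigma^{-\alpha}$ from the Laplace part of the regular-variation lemma in Appendix~\ref{app:small-ball}, then divide the three rates of Theorem~\ref{thm:sharp-actual-work} by $\E N_\gamma=\gamma Z_\sigma(y)$. Your dominated-convergence derivation of the Laplace asymptotic and your check that the leading constants are strictly positive only make explicit details that the paper's one-line proof leaves implicit.
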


\begin{proof}
Use the same regular-variation lemma for $Z_\sigma(y)$ and divide
\eqref{eq:sharp-work-rates} by $\gamma Z_\sigma(y)$.
\end{proof}

The same asymptotics determine how the Bernstein order should scale with the noise level.
Because $\vartheta_\sigma=n/a_\sigma\to\vartheta$ and $\beta=D_y/\vartheta$, the parameter $\beta$ is exactly
the limiting compensation exponent: $a_\sigma b_n^{\mathrm{tot}}(y)\to\beta$ and the
master-process inflation tends to $e^\beta$. Optimizing the order is therefore
equivalent to optimizing the leading constant with respect to $\beta$.

\begin{theorem}
\label{thm:actual-optimal-order}
Among orders with $n/a_\sigma\to\vartheta\in(0,\infty)$, under the
assumptions of Theorem~\ref{thm:sharp-actual-work}, the leading work
is minimized by
$n^*(\sigma)\sim a_\sigma D_y/\beta_\alpha$, where
\[
\beta_\alpha=
 \begin{cases}
 \displaystyle\arg\min_{\beta>0}
 \frac{e^\beta\Psi_\alpha(\beta)}{\beta},&0<\alpha<1,\\[1.8ex]
 1,&\alpha\ge1.
 \end{cases}
\]
For $0<\alpha<1$, the minimizer is unique and satisfies
$1-\beta_\alpha^{-1}
 +\Psi_\alpha'(\beta_\alpha)/\Psi_\alpha(\beta_\alpha)=0$.
In particular, $\beta_{1/2}=1.304366831\ldots$, so in predictive dimension
one the optimal early-stopped order is about $0.7667$ times the full-batch
order.
\end{theorem}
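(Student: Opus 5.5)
The plan is to read off from Theorem~\ref{thm:sharp-actual-work} the leading constant as an explicit function of the limit $\vartheta$, reparametrize it by $\beta=D_y/\vartheta$, and minimize in $\beta$. In each regime of \eqref{eq:sharp-work-rates} the rate factor ($a_\sigma^{2-\alpha}$, $a_\sigma\log a_\sigma$ or $a_\sigma$) does not depend on $\vartheta$. So among orders with $n/a_\sigma\to\vartheta$, ``minimizing the leading work'' means minimizing the prefactor. The factors $c_y\kappa_y^{\mathrm{tot}}(y)$ and $\mathcal I_y$ also do not depend on $\vartheta$: $\kappa_y^{\mathrm{tot}}$ is defined through $p$ and $y$ only, and $\mathcal I_y$ is an integral of $\kappa^{\mathrm{tot}}_y(p(q))/Q_y(q)$. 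Substituting $\vartheta=D_y/\beta$ (note $D_y>0$ because $y\in(0,1)^J$), the prefactor becomes $\gamma D_y\,c_y\kappa_y^{\mathrm{tot}}(y)\,e^\beta\Psi_\alpha(\beta)/\beta$ for $0<\alpha<1$. For $\alpha=1$ it becomes $\gamma D_y c_y\kappa_y^{\mathrm{tot}}(y)\,e^\beta/\beta$, and for $\alpha>1$ it becomes $\gamma D_y\mathcal I_y\,e^\beta/\beta$. Since $\beta\mapsto D_y/\beta$ is a bijection of $(0,\infty)$, a minimizer $\beta_\alpha$ gives $\vartheta^*=D_y/\beta_\alpha$, that is, $n^*\sim a_\sigma D_y/\beta_\alpha$.

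For $\alpha\ge1$ the objective is $e^\beta/\beta$. Its logarithm $\beta-\log\beta$ is strictly convex with derivative $1-\beta^{-1}$, so the unique minimizer is $\beta_\alpha=1$. This reproduces the full-batch optimum of Proposition~\ref{prop:full-batch-common}.

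For $0<\alpha<1$ I will study $f(\beta)=\beta-\log\beta+\log\Psi_\alpha(\beta)$, using the second representation in \eqref{eq:Psi-alpha}.
\begin{itemize}
\item Convexity. $\Psi_\alpha$ is, up to the constant $\Gamma(\alpha+1)$, the Laplace transform of the finite measure $s^{-\alpha}\ind_{(0,1)}(s)\,\mathrm ds$. Hence $\log\Psi_\alpha$ is convex, by Hölder or Cauchy--Schwarz applied to the second derivative. Since $-\log\beta$ is strictly convex, $f$ is strictly convex and smooth on $(0,\infty)$.
\item Coercivity near $0$. As $\beta\downarrow0$, $\Psi_\alpha(\beta)\to\Gamma(\alpha+1)/(1-\alpha)<\infty$, so $f\to\infty$ through the $-\log\beta$ term.
\item Coercivity near $\infty$. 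As $\beta\to\infty$, the substitution $u=\beta s$ gives $\Psi_\alpha(\beta)\sim\Gamma(\alpha+1)\Gamma(1-\alpha)\beta^{\alpha-1}$. Hence $f(\beta)=\beta+(\alpha-2)\log\beta+O(1)\to\infty$.
\end{itemize}
Therefore $f$ has a unique minimizer. It is characterized by $f'(\beta)=0$, which is exactly $1-\beta^{-1}+\Psi_\alpha'/\Psi_\alpha=0$.

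For $\alpha=1/2$, I will use the closed form $\Psi_{1/2}(\beta)=\Gamma(3/2)\sqrt{\pi/\beta}\,\operatorname{erf}(\sqrt\beta)$. Its logarithmic derivative is $-\tfrac{1}{2\beta}+\tfrac{e^{-\beta}}{\sqrt{\pi\beta}\,\operatorname{erf}(\sqrt\beta)}$. The scalar equation $f'(\beta)=0$ is then solved numerically by bisection or Newton, which gives $\beta_{1/2}\approx1.304366831$. The full-batch optimum is $n\sim a_\sigma D_y$, so the ratio of the two orders is $1/\beta_{1/2}\approx0.7667$.

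The main obstacle is conceptual rather than computational. It is justifying that optimizing the prefactor is the right notion. This needs the asymptotics in Theorem~\ref{thm:sharp-actual-work} to hold for each fixed limit $\vartheta$, and the rate exponent to be independent of $\vartheta$; both are the case. I will state the optimality in that sense: any other limit $\vartheta$ yields a strictly larger asymptotic constant at the same rate.
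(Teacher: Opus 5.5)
Your proposal is correct and follows essentially the same route as the paper: substitute $\vartheta=D_y/\beta$ into the prefactors of Theorem~\ref{thm:sharp-actual-work}, minimize $e^\beta/\beta$ for $\alpha\ge1$, and for $0<\alpha<1$ combine log-convexity of the Laplace transform $\Psi_\alpha$ with strict convexity of $-\log\beta$, then solve numerically at $\alpha=1/2$. Your coercivity limits as $\beta\downarrow0$ and $\beta\to\infty$ supply the existence of the minimizer, which the paper's proof leaves implicit, and your $\operatorname{erf}$ closed form for $\Psi_{1/2}$ is a convenient way to carry out the numerical step.
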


\begin{proof}
Since $\vartheta=D_y/\beta$, the $\beta$-dependent factor in
\eqref{eq:sharp-work-rates} is
$D_y e^\beta\Psi_\alpha(\beta)/\beta$ for $0<\alpha<1$ and
$D_y e^\beta/\beta$ for $\alpha\ge1$. The first objective is strictly
log-convex because $\Psi_\alpha$ is a Laplace transform of a positive measure;
the second is minimized at $\beta=1$. Numerical solution at $\alpha=1/2$
gives the stated value.
\end{proof}

Thus early stopping does more than reduce a constant. The within-event range rule changes the average
cost of each started Bernstein event, while candidate-level termination changes
the small-noise exponent itself. The latter effect is controlled by the local
prior-predictive geometry through $\alpha$, not by the nominal dimension of
$q$. The only remaining model-specific requirement is exact Bernoulli access
to the bounded forward observables; Section~\ref{sec:pde} supplies such access
for a class of elliptic PDEs.

\section{A PDE realization of the Bernoulli-access interface}
\label{sec:pde}

The likelihood-coin framework developed in Section~\ref{sec:factory} is
model-agnostic once exact Bernoulli access to the forward observables is
available. The likelihood factory does not otherwise depend on the governing equation. This section provides one concrete realization of that interface for a bounded class of elliptic
PDEs. A Feynman--Kac representation expresses each observable as the
expectation of a bounded path functional, which can be realized directly as a
Bernoulli event rather than estimated by Monte Carlo averaging. For the
elliptic resolvent problem below, the reaction term is implemented through
Poisson killing and the source term through a terminal Bernoulli mark.
Function-valued coefficients are handled by lazy evaluation of convergent
random series, so the posterior sampler requires neither a deterministic PDE
solve nor a fixed truncation of the unknown.

\subsection{Feynman--Kac representation and Bernoulli access}

Let $D\subset\mathbb R^d$ be bounded, let $(\mathsf B_t)_{t\ge0}$ be Brownian
motion with generator $\frac12\Delta$, and let $\tau_D$ be its first exit time.
For
\[
\left(q-\frac12\Delta\right)u=f \quad\hbox{in }D,
\qquad u=g \quad\hbox{on }\partial D,
\]
with $q\ge0$, the elliptic Feynman--Kac formula gives
\[
 u(x)=\E_x\!\left[e^{-\int_0^{\tau_D}q(\mathsf B_s)\,\mathrm ds}
 g(\mathsf B_{\tau_D})+
 \int_0^{\tau_D}e^{-\int_0^tq(\mathsf B_s)\,\mathrm ds}f(\mathsf B_t)\,\mathrm dt\right].
\]
See, for example, \cite{Evans2013SDE,KaratzasShreve1991}. The exponential
factor is the survival probability under the pathwise hazard $q(\mathsf B_t)$.
Whenever a stochastic representation can be written as
$u_q(x)=\E[Y_q(x)\mid q]$ with $0\le Y_q(x)\le1$, an independent
$U\sim\Unif(0,1)$ turns it directly into the exact forward coin
$\ind_{\{U<Y_q(x)\}}\sim\Bern(u_q(x))$. Thus the method uses a single
randomized path event rather than first estimating $u_q(x)$ numerically:
\[
\text{path event}\longrightarrow u_q(x)\text{-coin}
\longrightarrow\text{likelihood coin}\longrightarrow\text{PPP thinning}.
\]

\subsection{A bounded Feynman--Kac resolvent class}

On the two-dimensional torus $\Torus$, let $\lambda>0$ and $q_{\max}>0$, and assume $0\le q(x)\le q_{\max}$, $0\le h(x)\le\lambda$.
Consider
\begin{equation}
 \left(\lambda+q(x)-\frac12\Delta\right)u(x)=h(x)
 \qquad\hbox{on }\Torus.
 \label{eq:bounded-resolvent-pde}
\end{equation}
If $T\sim\operatorname{Exp}(\lambda)$ is independent of Brownian motion on $\Torus$, the resolvent form of Feynman--Kac is
\begin{equation}
 u(s)=
 \E_s\!\left[
 \exp\!\left\{-\int_0^T q(\mathsf B_t)\dd t\right\}
 \frac{h(\mathsf B_T)}{\lambda}
 \right].
 \label{eq:bounded-resolvent-fk}
\end{equation}
The bounds on $q$ and $h$ imply $0\le u\le1$. Here $\lambda$ is both the resolvent parameter and the rate of the exponential clock; the choice $h\equiv\lambda$ used later is only a specialization for which the terminal source coin is identically one.

Assume that at a queried location $x$ we can generate independent coins with success probabilities $q(x)/q_{\max}$ and $h(x)/\lambda$. The first coin marks potential killing events and the second supplies the terminal payoff.

\begin{theorem}[Poisson-killing forward coin]
\label{thm:reaction-coin}
Fix $s\in\Torus$. Draw $T\sim\operatorname{Exp}(\lambda)$ and generate a homogeneous Poisson process of rate $q_{\max}$ on $[0,T]$. At each event time $\tau_r$, simulate the Brownian location $\mathsf B_{\tau_r}$ and generate a $q(\mathsf B_{\tau_r})/q_{\max}$-coin. Return zero at the first successful killing mark. If no killing mark succeeds before $T$, generate an $h(\mathsf B_T)/\lambda$-coin and return its outcome. Then
\[
J_u(s)\sim\Bern(u(s)),
\]
where $u$ is given by \eqref{eq:bounded-resolvent-fk}.
\end{theorem}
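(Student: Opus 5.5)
The plan is to condition on the Brownian path and the exponential clock, show that the killing mechanism is a thinned Poisson process, and then average. Fix $s$ and work conditionally on $T$ and on the whole path $(\mathsf B_t)_{0\le t\le T}$ started at $s$. Under this conditioning, the simulation simply evaluates the path at the Poisson event times and at $T$. So the algorithm is equivalent in law to first drawing $T$ and the full path, and then placing the rate-$q_{\max}$ Poisson process on $[0,T]$ independently of the path. This requires that the Poisson clock, the killing coins and the terminal coin use auxiliary randomness independent of $\mathsf B$ and $T$. It also requires that sampling $\mathsf B$ only at finitely many random times gives the correct finite-dimensional marginals of the same path, which is standard since Brownian increments on the torus can be simulated sequentially.

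The first step treats the killing marks. Given the path, each Poisson event at time $t$ is retained as a successful kill independently with probability $q(\mathsf B_t)/q_{\max}\in[0,1]$. By the independent-thinning property recalled in Section~\ref{sec:poisson}, applied on $[0,T]$ with intensity $q_{\max}\,\mathrm dt$, the successful kills form a Poisson process with intensity $q(\mathsf B_t)\,\mathrm dt$. Measurability of $t\mapsto q(\mathsf B_t)$ is needed here and holds because $q$ is bounded and measurable and the path is continuous. Hence the conditional probability of no successful kill on $[0,T]$ is $\exp\{-\int_0^T q(\mathsf B_t)\,\mathrm dt\}$.

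The second step treats the terminal payoff. On the event of no kill, the returned value is an independent coin with success probability $h(\mathsf B_T)/\lambda\in[0,1]$. Its randomness is independent of the killing marks given the path, so the conditional success probability factorizes as
\[
\Pp\bigl(J_u(s)=1\mid T,\mathsf B\bigr)=\exp\Bigl\{-\int_0^T q(\mathsf B_t)\,\mathrm dt\Bigr\}\frac{h(\mathsf B_T)}{\lambda}.
\]
Taking expectations over $T\sim\operatorname{Exp}(\lambda)$ and the path, by the tower property, gives exactly the right-hand side of \eqref{eq:bounded-resolvent-fk}. That equals $u(s)$, so $J_u(s)\sim\Bern(u(s))$.

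The main obstacle is rigor in the first step. The algorithm stops at the first successful kill, so later event times are never generated, and the path is only ever queried at finitely many points. I would handle this with a coupling argument. Construct on one probability space the full path, the full Poisson process on $[0,T]$ and all marks. Then observe that the algorithm's output is a deterministic function of these objects, namely the indicator that no mark succeeds times the terminal coin. Early termination does not change this function, only how much of it is revealed, in the same spirit as Lemma~\ref{lem:early}.
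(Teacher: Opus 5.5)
Your proposal is correct and follows the paper's proof: condition on $T$ and the path, use independent thinning to obtain a killing process of rate $q(\mathsf B_t)$ with survival probability $\exp\{-\int_0^T q(\mathsf B_t)\,\mathrm dt\}$, multiply by the terminal-coin probability $h(\mathsf B_T)/\lambda$, and average over the path and $T$. Your coupling remark makes explicit a step the paper leaves implicit: early termination and evaluating the path at only finitely many times do not change the output.
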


\begin{proof}
Condition on $T$ and the Brownian path. Thinning the rate-$q_{\max}$ Poisson process with acceptance probability $q(\mathsf B_t)/q_{\max}$ gives an inhomogeneous killing process with rate $q(\mathsf B_t)$. The conditional probability of no accepted killing event before $T$ is therefore
 $\exp\!\left\{-\int_0^Tq(\mathsf B_t)\dd t\right\}$.
Conditional on survival, the terminal coin succeeds with probability $h(\mathsf B_T)/\lambda$. Taking expectation with respect to the Brownian path and the independent exponential time $T$ gives \eqref{eq:bounded-resolvent-fk}.
\end{proof}

\subsection{Lazy Bernoulli access to function-valued unknowns}

Let $\{\phi_\ell\}_{\ell\ge1}$ be measurable functions on a domain $\mathcal D$ with $|\phi_\ell(x)|\le1$, and let
\[
f_\xi(x)
 =b+\sum_{\ell=1}^\infty \omega_\ell\xi_\ell\phi_\ell(x),
 \qquad |\xi_\ell|\le1,
\]
where
\begin{equation}
 A:=\sum_{\ell=1}^\infty|\omega_\ell|<\min\{b,1-b\}.
 \label{eq:field-bound}
\end{equation}
The coordinate space is $\mathcal X_\xi=[-1,1]^{\mathbb N}$ with its product $\sigma$-algebra and a product prior $\pi_0$. If the $\phi_\ell$ are continuous, absolute summability implies uniform convergence and measurability of $\xi\mapsto f_\xi$ into $C(\mathcal D)$; the coordinate prior therefore induces a function-space prior by pushforward.

By \eqref{eq:field-bound}, $f_\xi(x)\in(0,1)$ uniformly. Define
$S_r(x)=b+\sum_{\ell=1}^r \omega_\ell\xi_\ell\phi_\ell(x)$,
 $R_r=\sum_{\ell>r}|\omega_\ell|$.
Then
\begin{equation}
 S_r(x)-R_r\le f_\xi(x)\le S_r(x)+R_r.
 \label{eq:tail-interval}
\end{equation}
For a single $U\sim\Unif(0,1)$, reveal coefficients sequentially until either $U<S_r-R_r$ or $U>S_r+R_r$.

\begin{theorem}[Exact lazy function-value coin]
\label{thm:lazy-function}
Assume \eqref{eq:field-bound}. For every fixed $x$, the lazy decision stops after finitely many coefficient reveals almost surely and returns
$J_f(x)=\ind_{\{U<f_\xi(x)\}}$.
Hence, conditional on $f_\xi$,
$J_f(x)\sim\Bern(f_\xi(x))$.
If $\tau$ is the number of revealed coefficients, then
\begin{equation}
 \Pp(\tau>r\mid\xi)\le \min\{1,2R_r\},
 \qquad
 \E(\tau\mid\xi)\le\sum_{r=0}^\infty\min\{1,2R_r\}.
 \label{eq:tau-bound}
\end{equation}
In particular, if
$|\omega_\ell|=A(1-\rho)\rho^{\ell-1}$, $0<\rho<1$,
then $R_r=A\rho^r$ and, when $2A\le1$,
$\E\tau\le\frac{2A}{1-\rho}$.
\end{theorem}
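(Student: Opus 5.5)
The argument is a sandwich one: the uniform $U$ is compared with an interval that traps $f_\xi(x)$. Fix $x$ and condition on $\xi$; $U\sim\Unif(0,1)$ is independent of $\xi$. By \eqref{eq:tail-interval}, after revealing $r$ coefficients we know $f_\xi(x)\in[S_r(x)-R_r,\,S_r(x)+R_r]$.

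\emph{Correctness.} Suppose the procedure stops at step $r$ because $U<S_r-R_r$. Then $U<f_\xi(x)$, so it returns $1=\ind_{\{U<f_\xi(x)\}}$. Suppose instead it stops because $U>S_r+R_r$. Then $U>f_\xi(x)$, and it returns $0$. So whenever the procedure stops, its output equals $\ind_{\{U<f_\xi(x)\}}$.

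\emph{Finite stopping.} Non-stopping at step $r$ is exactly the event $U\in[S_r-R_r,S_r+R_r]$. This is an interval of length $2R_r$, and it always contains $f_\xi(x)$. Since $R_r\to0$ by absolute summability, the event $\{\tau=\infty\}$ forces $U=f_\xi(x)$, which has conditional probability zero. Hence $\tau<\infty$ a.s. and $J_f(x)=\ind_{\{U<f_\xi(x)\}}$ a.s. Because $f_\xi(x)\in(0,1)$ by \eqref{eq:field-bound}, this gives $J_f(x)\mid f_\xi\sim\Bern(f_\xi(x))$.

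\emph{Tail bound.} We have $\{\tau>r\}\subseteq\{U\in[S_r-R_r,S_r+R_r]\}$. This holds because the stopping rule is checked at each $r$, so exceeding $r$ requires in particular non-stopping at $r$. The interval has Lebesgue measure at most $2R_r$, and the measure of its intersection with $[0,1]$ is at most $\min\{1,2R_r\}$. Since $U$ is independent of $\xi$ and $S_r$ is $\xi$-measurable, $\Pp(\tau>r\mid\xi)\le\min\{1,2R_r\}$. The expectation bound follows from the tail-sum formula $\E(\tau\mid\xi)=\sum_{r\ge0}\Pp(\tau>r\mid\xi)$, with $R_0=A$.

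\emph{Geometric example.} With $|\omega_\ell|=A(1-\rho)\rho^{\ell-1}$, a geometric sum gives $R_r=A\rho^r$. When $2A\le1$, every term satisfies $\min\{1,2A\rho^r\}=2A\rho^r$. Summing over $r\ge0$ gives $\E\tau\le 2A/(1-\rho)$.

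The only delicate point is the indexing convention for $\tau$. One must decide whether the check at $r=0$, using $S_0=b$ and $R_0=A$, counts as zero reveals, so that the tail sum starts at $r=0$ as in \eqref{eq:tau-bound}. Beyond that, the argument is direct.
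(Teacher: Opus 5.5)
Your proposal is correct and follows essentially the same route as the paper. The enclosure $[S_r-R_r,S_r+R_r]\ni f_\xi(x)$ has length $2R_r\to0$, and together with $\Pp(U=f_\xi(x)\mid\xi)=0$ this gives almost-sure termination and exactness of the output; the inclusion $\{\tau>r\}\subseteq\{U\in[S_r-R_r,S_r+R_r]\cap[0,1]\}$ plus the tail-sum formula then gives the tail and mean bounds. Your indexing worry is settled by the definition of $\tau$ as the number of revealed coefficients: resolution at the initial check (with $S_0=b$, $R_0=A$) means $\tau=0$, which is why the sum starts at $r=0$.
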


\begin{proof}
Since $R_r\to0$, the intervals in \eqref{eq:tail-interval} shrink to $f_\xi(x)$. Because $U$ is continuous and independent, $\Pp(U=f_\xi(x)\mid\xi)=0$, so the decision resolves almost surely. The event $\{\tau>r\}$ requires $U\in[S_r-R_r,S_r+R_r]\cap[0,1]$, whose length is at most $2R_r$. Summing the tail probabilities gives \eqref{eq:tau-bound}; the geometric case follows directly.
\end{proof}

A retained atom may therefore be represented by a lazily revealed infinite coefficient sequence. Each resolved field-value decision is measurable with respect to the revealed prefix and its auxiliary uniform variable and is unchanged for every admissible completion of the tail. Because the coordinate prior is a product measure, coefficients beyond the largest revealed index retain their product-prior law conditional on the realized algorithmic history and may be generated later when a fuller realization is needed.

We use two specializations of \eqref{eq:bounded-resolvent-pde}. If $q\equiv0$ and $h=\lambda f_\xi$, then
\[
\left(\lambda-\frac12\Delta\right)u_{f_\xi}=\lambda f_\xi,
 \qquad
 u_{f_\xi}(s)=\E_s[f_\xi(\mathsf B_T)],
\]
and Theorem~\ref{thm:lazy-function} supplies the terminal $f_\xi(\mathsf B_T)$-coin. This gives an exact inverse-source specialization of the same oracle construction.

If $h\equiv\lambda$ and
\begin{equation}
 q_\xi(x)=q_{\max}f_\xi(x),
 \qquad 0<f_\xi(x)<1,
 \label{eq:qfield}
\end{equation}
then
\begin{equation}
 \left(\lambda+q_\xi(x)-\frac12\Delta\right)u_{q_\xi}=\lambda,
 \label{eq:reaction-function-pde}
\end{equation}
and
\[
p_s(q_\xi):=u_{q_\xi}(s)
 =\E_s\!\left[\exp\!\left\{-\int_0^Tq_\xi(\mathsf B_t)\dd t\right\}\right].
\]
Here the terminal coin is one and each killing mark is generated by the lazy $f_\xi$-coin.

For this reaction specialization, define
$\mathcal R_f:=\sum_{r=0}^{\infty}\min\{1, 2R_r\}$.
For a common Bernstein order \(n\), write
$B_n(y):=\sum_{j=1}^J a_j b_n(y_j)$.
\begin{proposition}
\label{prop:reaction-work}
Let $N_{\rm kill}$ be the number of candidate killing events inspected by one forward coin for \eqref{eq:reaction-function-pde}, and let $N_{\rm coef}$ be the total number of random-series coefficient terms inspected by its lazy field-value coins, counting repeated inspections across distinct field queries. Then
\[
\E N_{\rm kill}\le \frac{q_{\max}}{\lambda},
 \qquad
 \E N_{\rm coef}\le \frac{q_{\max}}{\lambda}\,\mathcal R_f.
\]
For a common Bernstein order $n$ and Gaussian observations \eqref{eq:multi-L},
\begin{equation}
 \E W_{\rm coef}
 \le
 \gamma e^{B_n(y)}
 n\left(\sum_{j=1}^J a_jC_n(y_j)\right)
 \frac{q_{\max}}{\lambda}\,\mathcal R_f.
 \label{eq:total-work-bound}
\end{equation}
The bound contains neither a spatial discretization parameter nor a fixed truncation dimension for the random field.
\end{proposition}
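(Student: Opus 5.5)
The plan is to prove the three bounds in sequence, each by conditioning on the quantity that governs the preceding level of work.

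\emph{Killing events.} Condition on $T$. The candidate killing events are the atoms of a rate-$q_{\max}$ Poisson process on $[0,T]$, and the forward coin of Theorem~\ref{thm:reaction-coin} inspects them in order, stopping at the first accepted killing. So $N_{\rm kill}$ is at most the total number of atoms in $[0,T]$, which has conditional mean $q_{\max}T$. Averaging over $T\sim\operatorname{Exp}(\lambda)$ gives $\E N_{\rm kill}\le q_{\max}\E T=q_{\max}/\lambda$. Early termination only lowers this count, so the inequality, rather than an equality, is what we state.

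\emph{Coefficient work.} Each inspected killing event triggers one lazy field-value coin at the Brownian location $\mathsf B_{\tau_r}$. By Theorem~\ref{thm:lazy-function}, the number of coefficients that coin reveals has conditional mean at most $\mathcal R_f$, given $\xi$ and the query location. The bound \eqref{eq:tau-bound} is uniform in $x$, because it uses only $R_r$ together with the fresh uniform mark $U$. Write $N_{\rm coef}=\sum_{r\le N_{\rm kill}}\tau^{(r)}$. Since each $U$ is independent of the preceding history, a Wald-type identity applies: the event $\{r\le N_{\rm kill}\}$ is determined before $U^{(r)}$ is drawn. Using the tower property term by term, this gives $\E N_{\rm coef}\le \mathcal R_f\,\E N_{\rm kill}\le (q_{\max}/\lambda)\mathcal R_f$. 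Coefficients are counted afresh at every distinct field query, as the statement stipulates, so no sharing across queries needs to be tracked.

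\emph{Total sampler work.} Start from Algorithm~\ref{alg:main} with both stopping mechanisms ignored, since stopping only reduces work. The master PPP has mean size $\gamma e^{B_n(y)}$. Each candidate draws $R\sim\Pois(A_n)$ auxiliary events, where $A_n=\sum_j a_jC_n(y_j)$. Each event consumes at most $n$ forward coins. Each forward coin costs, in expectation, at most $(q_{\max}/\lambda)\mathcal R_f$ coefficient inspections, and this holds conditionally on the candidate $\xi$ and on everything generated before that coin.

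Chaining these conditional expectations gives \eqref{eq:total-work-bound}. We condition first on the master process, then on $R$ and the component labels, and then we apply the per-coin bound inside each Bernstein event through another Wald/tower argument. The reason this works is that the number of coins drawn in an event is determined by previously revealed outcomes. The final remark, that no mesh or truncation parameter appears, is read off directly from the bound.

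\emph{Main obstacle.} The delicate point is justifying the Wald-type steps when the count of sub-tasks is a stopping time that depends on earlier outcomes. This arises twice: for the killing events, since the Brownian locations and the stop at the first acceptance depend on earlier coins, and for the Bernstein range stopping. The plan is to write each count as $\sum_r \ind_{\{\text{task } r \text{ started}\}}\cdot(\text{cost}_r)$. The event $\{\text{task } r \text{ started}\}$ is measurable with respect to the history before task $r$, while $\text{cost}_r$ has conditional mean bounded by a deterministic constant given that history. Monotone convergence then gives the bound without any independence assumption between counts and costs.
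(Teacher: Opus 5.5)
Your proof is correct and follows the paper's argument. You bound killing inspections by all rate-$q_{\max}$ atoms on $[0,T]$, multiply by the uniform per-query lazy cost $\mathcal R_f$, and chain the master-PPP mean size, the Poisson event count, and at most $n$ forward coins per event while ignoring the Bernstein and candidate-level stopping. Your tower-property treatment of the history-dependent counts (the start of each task is decided before its fresh randomness is drawn) makes explicit the Wald steps that the paper leaves implicit.
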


\begin{proof}
If every rate-$q_{\max}$ event on $[0,T]$ were inspected, the conditional mean count would be $q_{\max}T$. Early termination at the first accepted killing event can only reduce the count, so $\E N_{\rm kill}\le q_{\max}\E T=q_{\max}/\lambda$. Each inspected killing location invokes a lazy field-value coin with expected inspection count at most $\mathcal R_f$. A Bernstein event uses at most $n$ forward coins, sensor $j$ generates a Poisson number of such events with mean $a_jC_n(y_j)$, and the inflated master PPP has mean cardinality $\gamma e^{B_n(y)}$. Ignoring all early terminations gives \eqref{eq:total-work-bound}.
\end{proof}

Proposition~\ref{prop:reaction-work} is a model-side work bound for one Bernoulli-accessible PDE class. When the prior predictive law of the observable vector also satisfies Assumption~\ref{ass:small-ball}, Theorem~\ref{thm:sharp-actual-work} supplies the corresponding sharp likelihood-factory rate; the factor $q_{\max}\mathcal R_f/\lambda$ remains a conservative bound for the average coefficient-query cost of one forward Bernoulli call.

\begin{corollary}
\label{cor:reaction-posterior}
Let $\pi_0$ be a product prior on $\mathcal X_\xi$, let $\mathcal Q(\xi)=q_\xi$, and let $\mu_0=\mathcal Q_\#\pi_0$ be the induced prior on the reaction fields \eqref{eq:qfield}. Use Theorem~\ref{thm:reaction-coin} to generate the forward coins in the Bernstein--Poisson likelihood factory at every sensor. Then
\begin{equation}
 \eta_\gamma\sim\PPP(\gamma\nu^y).
 \label{eq:reaction-posterior-ppp}
\end{equation}
Conditional on its cardinality, the retained fields are independent draws from the continuum Bayesian posterior. No deterministic spatial discretization or fixed parameter truncation enters the mathematical target.
\end{corollary}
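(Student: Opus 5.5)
The proof reduces Corollary~\ref{cor:reaction-posterior} to Corollary~\ref{cor:gaussian-ppp}, which in turn rests on Theorem~\ref{thm:coin-ppp}. The only model-specific requirement is that, for every reaction field $q_\xi$ in the support of $\mu_0$, the procedure of Theorem~\ref{thm:reaction-coin} supplies a genuine oracle. Such an oracle must return independent Bernoulli variables with success probability exactly $p_{s_j}(q_\xi)=u_{q_\xi}(s_j)\in[0,1]$ at each sensor $s_j$, and must be measurable in the parameter. Given that, the factorization \eqref{eq:multi-scaled} yields a scaled likelihood coin with factor $c=e^{-B_n(y)}$. Theorem~\ref{thm:coin-ppp} then gives \eqref{eq:reaction-posterior-ppp}, together with the conditional iid representation of the retained atoms with law $\mu^y$. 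I would work on the coordinate space $\mathcal X_\xi$ with prior $\pi_0$ and likelihood $L(\mathcal Q(\xi);y)$, and push forward through $\mathcal Q$ at the end. Since $\mathcal Q$ is measurable into $C(\mathcal D)$ by absolute summability, a PPP with intensity $\gamma\,L(\mathcal Q(\cdot);y)\,\pi_0$ maps to a PPP with intensity $\gamma\nu^y$ by the mapping theorem.

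The key steps, in order, are as follows.

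First, verify the hypotheses of Theorem~\ref{thm:reaction-coin} for $q_\xi=q_{\max}f_\xi$ and $h\equiv\lambda$. By \eqref{eq:field-bound} we have $0<q_\xi<q_{\max}$. The terminal coin is identically one. The killing-mark coin at $\mathsf B_{\tau_r}$ must have success probability $q_\xi(\mathsf B_{\tau_r})/q_{\max}=f_\xi(\mathsf B_{\tau_r})$, and Theorem~\ref{thm:lazy-function} provides exactly this coin, applied at the random location $\mathsf B_{\tau_r}$.

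Second, justify using Theorem~\ref{thm:lazy-function} at a random location. Conditionally on $\xi$, the Brownian path and the Poisson clock, each lazy decision uses a fresh uniform $U$. It therefore returns $\ind_{\{U<f_\xi(x)\}}$ with $x=\mathsf B_{\tau_r}$, and terminates almost surely. The proof of Theorem~\ref{thm:reaction-coin} then goes through verbatim, so the forward coin is $\Bern(p_{s_j}(q_\xi))$ given $\xi$.

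Third, check independence and consistency across queries. Repeated forward coins for the same candidate use fresh paths, clocks and uniforms, but all of them refer to the same $\xi$. Coefficients revealed by earlier queries are reused, and later queries reveal further coefficients drawn from the conditional product law. The step I expect to be the main obstacle is showing that this shared, lazily extended coefficient sequence does not introduce dependence that breaks the conditional independence needed in Lemma~\ref{lem:square} and Theorem~\ref{thm:onefactory}. I would handle it by a coupling. Realize the whole sequence $\xi\sim\pi_0$ up front, independent of all auxiliary randomness, and note that the lazy algorithm only ever reads a prefix of it. Because $\pi_0$ is a product measure, revealing coefficients on demand produces the same joint law as this up-front realization; this is the remark after Theorem~\ref{thm:lazy-function}. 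Conditionally on the full $\xi$, all forward coins are functions of independent auxiliary variables, so they are iid $\Bern(p_{s_j}(q_\xi))$, as required.

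Fourth, confirm finite termination and measurability. The expected killing count is finite by Proposition~\ref{prop:reaction-work}, and each lazy decision resolves almost surely, so every forward coin is produced in finite time. In addition, $\xi\mapsto p_{s_j}(q_\xi)$ is measurable by Fubini, and $Z(y)>0$ holds because $L>0$ everywhere. With these in place, Corollary~\ref{cor:gaussian-ppp} applies.

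Finally, the statement that no discretization enters the target follows because the retention probability equals the exact continuum likelihood. Every quantity used is either exact Brownian sampling at Poisson times or a lazily resolved decision that is invariant under all tail completions.
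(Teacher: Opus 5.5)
Your proposal is correct and follows the paper's route: Theorem~\ref{thm:reaction-coin}, with lazy field-value coins, supplies exact forward coins; the Gaussian factory then gives a scaled likelihood coin; and Corollary~\ref{cor:gaussian-ppp} together with Theorem~\ref{thm:coin-ppp} yields the PPP law and the conditional iid property. Your extra checks spell out details that the paper's three-line proof leaves implicit: conditional independence via the up-front coupling of the product-prior coefficients, lazy coins evaluated at random Brownian locations, measurability and $Z(y)>0$, and the mapping-theorem pushforward from $\mathcal X_\xi$.
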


\begin{proof}
Theorem~\ref{thm:reaction-coin} gives exact Bernoulli access to each $p_j(q_\xi)$. The Gaussian factory therefore supplies a coin with success probability $e^{-B_{\bm n}(y)}L(q_\xi;y)$, and Corollary~\ref{cor:gaussian-ppp} yields \eqref{eq:reaction-posterior-ppp}. The conditional-iid statement follows from Theorem~\ref{thm:coin-ppp}.
\end{proof}

The continuum oracle is deliberately restricted to bounded Bernoulli-accessible
resolvent observables. In the present construction the reaction term is
nonnegative and bounded by a known \(q_{\max}\), and the terminal source mark
lies in \([0,1]\). Unknown diffusion coefficients, sign-changing or unbounded
potentials, and oscillatory or complex-valued observables require different
stochastic representations and are outside the current theory.


\section{Numerical experiments}
\label{sec:numerics}

The experiments test the method in four stages: correctness and calibration in a two-parameter benchmark; sharp stopped-work scaling; a function-valued continuum PDE realization with lazy coefficient revelation; and a matched-accuracy comparison with finite-difference prior rejection. Deterministic PDE solves are used only for validation and comparison, not inside the continuum sampler.

\subsection{Two-parameter PDE benchmark}

We first use a two-dimensional problem with an analytically available reference to validate separately the forward Bernoulli oracle, the likelihood factories, and the resulting posterior Poisson thinning. 

On $\Torus$, let $\theta=(\theta_1,\theta_2)\in[-1,1]^2$ and consider
\[
 \left(36-\frac12\Delta\right)u_\theta(x)
 =32\left\{\frac12+0.24\theta_1\cos(2\pi x_1)+0.24\theta_2\cos(2\pi x_2)\right\}.
\]
The prior is uniform on $[-1,1]^2$. At $s_1=(0,1/4)$ and $s_2=(1/4,0)$,
\[
 p_j(\theta)=\frac{16}{36}+\frac{32(0.24)}{36+2\pi^2}\theta_j,
 \qquad j=1,2.
\]
The closed form is used only for deterministic reference calculations and calibration; the continuum PDE oracle uses the Feynman--Kac construction of Section~\ref{sec:pde}.

We take $\theta^\dagger=(0.55,-0.35)$, $\sigma=0.055$, and add the fixed perturbation $(0.012,-0.010)$, giving $y=(0.532226,0.386220)$. Table~\ref{tab:two-parameter-forward} compares exact probabilities with $30000$ continuum forward coins.

\begin{table}[!tbp]
\centering\small
\caption{Two-parameter benchmark: continuum forward-coin validation.}
\label{tab:two-parameter-forward}
\begin{tabular}{ccccc}
\toprule
sensor & exact forward & coin estimate & observed data & $2\widehat{\mathrm{SE}}$\\
\midrule
$(0,1/4)$ & 0.520226 & 0.521000 & 0.532226 & 0.005768\\
$(1/4,0)$ & 0.396220 & 0.396200 & 0.386220 & 0.005648\\
\bottomrule
\end{tabular}
\end{table}

For posterior sampling we use $n=128$ and $\gamma=3000$. Under independent errors the scalar compensations give master inflation $1.8786$. For the correlated experiment we take
 $\Sigma=\begin{pmatrix}1&0.75\\0.75&1\end{pmatrix}$,
and apply Theorem~\ref{thm:correlated-factory}. The discrete multivariate compensation is $b_{\bm n}^{\Omega}(y)=0.008717$, the auxiliary normalization is $C_{\bm n}^{\Omega}(y)=2.6373$, and the corresponding master inflation is $4.2241$. In both panels of Figure~\ref{fig:two-parameter-thinning}, retention is decided by the actual likelihood coin. For the correlated coin the reachable threshold range is propagated backward over the balanced component-reveal schedule, so the multivariate Bernstein event can also terminate before all $2n$ input coins are revealed.

\begin{figure}[!tbp]
\centering
\includegraphics[width=0.93\textwidth]{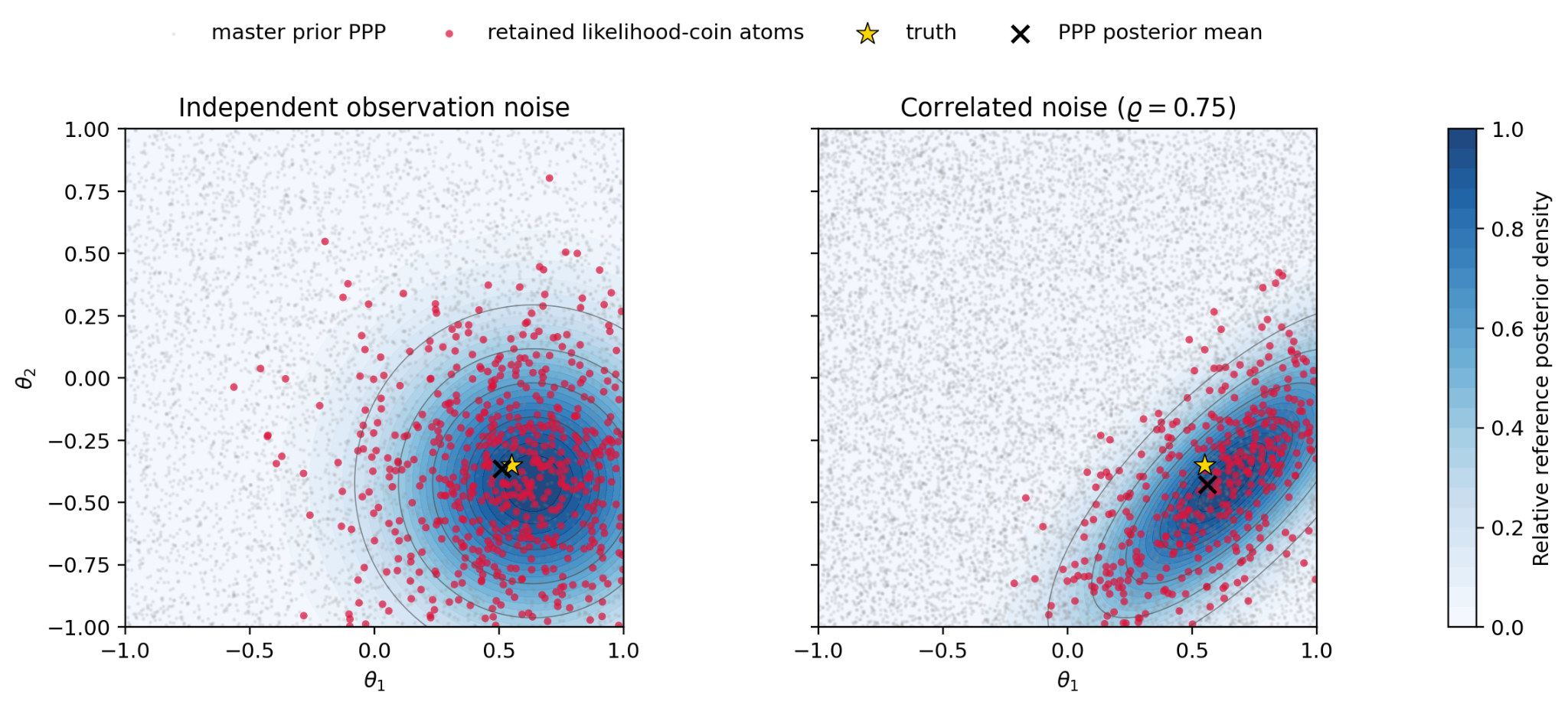}
\caption{Posterior PPPs for independent noise (left) and correlated noise with $\varrho=0.75$ (right). Gray points are master candidates, red points are retained atoms, and background contours show the deterministic reference posterior.}
\label{fig:two-parameter-thinning}
\end{figure}

For independent noise, the reference evidence is $0.18943$ and the displayed realization gives $N_\gamma/\gamma=0.19367$; the PPP posterior mean $(0.51030,-0.36354)$ is close to the quadrature value $(0.50891,-0.36276)$. For correlated noise, the corresponding values are $0.12313$ and $0.12600$, while the posterior means are $(0.55225,-0.45358)$ from quadrature and $(0.56071,-0.42746)$ from the PPP. The rotation of the right-hand posterior reflects the off-diagonal precision terms rather than a change in the forward model.

Figure~\ref{fig:two-parameter-calibration} provides a direct calibration
of both likelihood factories. At each of four fixed parameter values,
we run $10000$ independent likelihood coins and divide the raw success
frequency by the known compensation scale. The compensated frequencies
are plotted against the exact reduced likelihoods, which are available
analytically in this benchmark. In all eight experiments, the exact
likelihood lies within the reported two-standard-error Monte Carlo
interval around the compensated coin estimate. 

\begin{figure}[!tbp]
\centering
\includegraphics[width=0.93\textwidth]
{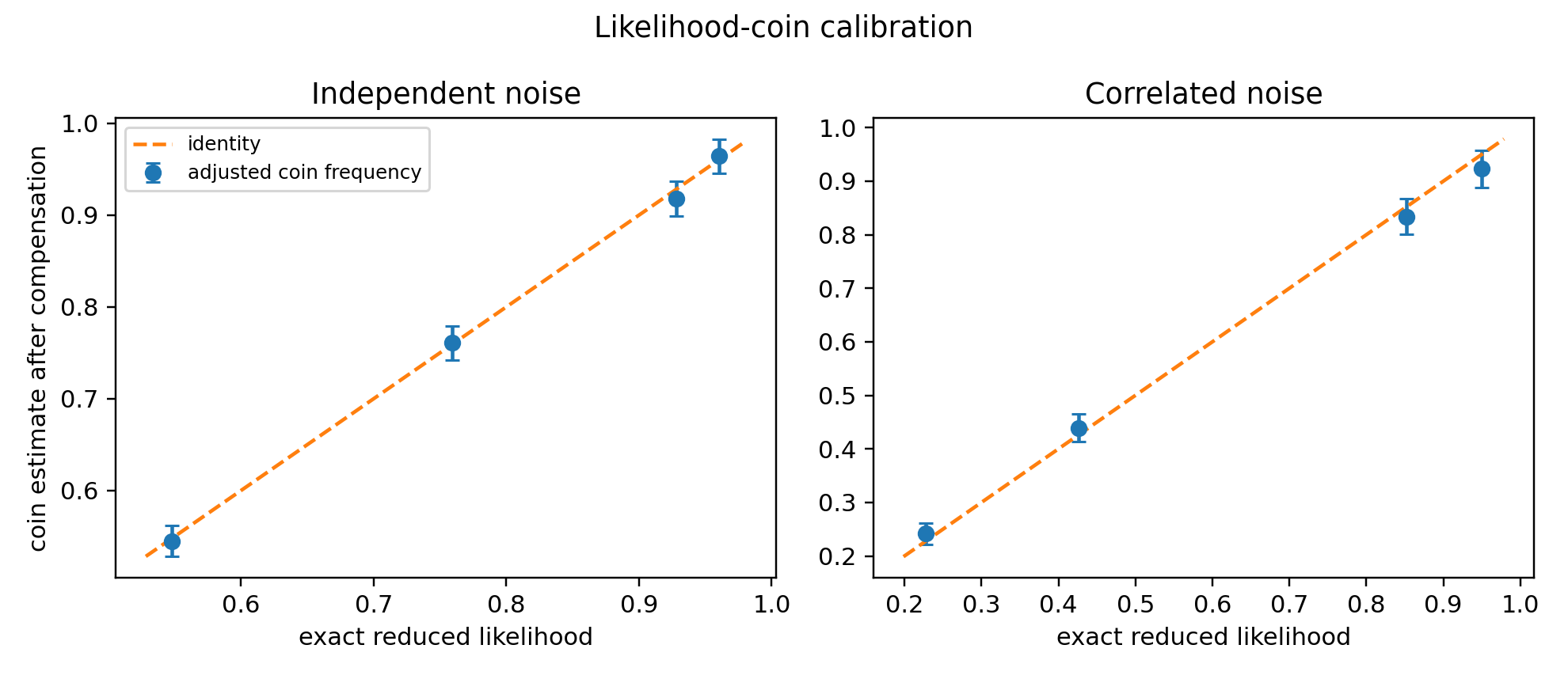}
\caption{Likelihood-coin calibration. Points are compensated coin frequencies with two-standard-error bars; the dashed line is the identity.}
\label{fig:two-parameter-calibration}
\end{figure}

Thus this benchmark isolates the probabilistic components of the method in a setting where the forward probabilities and posterior reference are independently available. The next experiment turns from correctness to the predicted work scaling, while the later function-valued example removes the availability of a finite-dimensional analytic forward representation.



\subsection{Small-ball dimension and sharp actual-work scaling}

For $d_{\mathrm{pred}}\in\{1,2,3\}$, take a uniform prior on $[0,1]^{d_{\mathrm{pred}}}$, the identity forward map, and observation vector $(1/2,\ldots,1/2)$ with independent errors of common variance $\sigma^2$. Then $\alpha=d_{\mathrm{pred}}/2$. The implementation uses the marked Poisson stream of Algorithm~\ref{alg:main},
as in Theorem~\ref{thm:actual-work-identity}: each event independently
selects a scalar component and uses its range-stopped Bernstein event; a
candidate terminates at the first rejection.

We use $\gamma=600$ and four independent runs at each noise level. The common order follows Theorem~\ref{thm:actual-optimal-order}; here $D_y=d_{\mathrm{pred}}/4$. Figure~\ref{fig:sharp-work} compares measured work with the sharp rates, and Table~\ref{tab:sharp-slopes} reports tail fits after removing the logarithmic factor for $d_{\mathrm{pred}}=2$.

\begin{table}[!tbp]
\centering
\caption{Sharp-complexity experiment: predicted rates and empirical tail exponents. Tail fits use the five largest values of $a_\sigma$. For $d_{\mathrm{pred}}=2$, the fitted quantity is $W/\log a_\sigma$.}
\label{tab:sharp-slopes}
\begin{tabular}{ccccc}
\toprule
$d_{\mathrm{pred}}$ & $\alpha$ & sharp scale & tail fitted exponent & predicted exponent\\
\midrule
1 & $1/2$ & $a_\sigma^{3/2}$ & 1.418 & 1.500\\
2 & 1 & $a_\sigma\log a_\sigma$ & 0.965 & 1.000\\
3 & $3/2$ & $a_\sigma$ & 1.006 & 1.000\\
\bottomrule
\end{tabular}
\end{table}

\begin{figure}[!tbp]
\centering
\begin{subfigure}{0.49\textwidth}
\centering
\includegraphics[width=\textwidth]{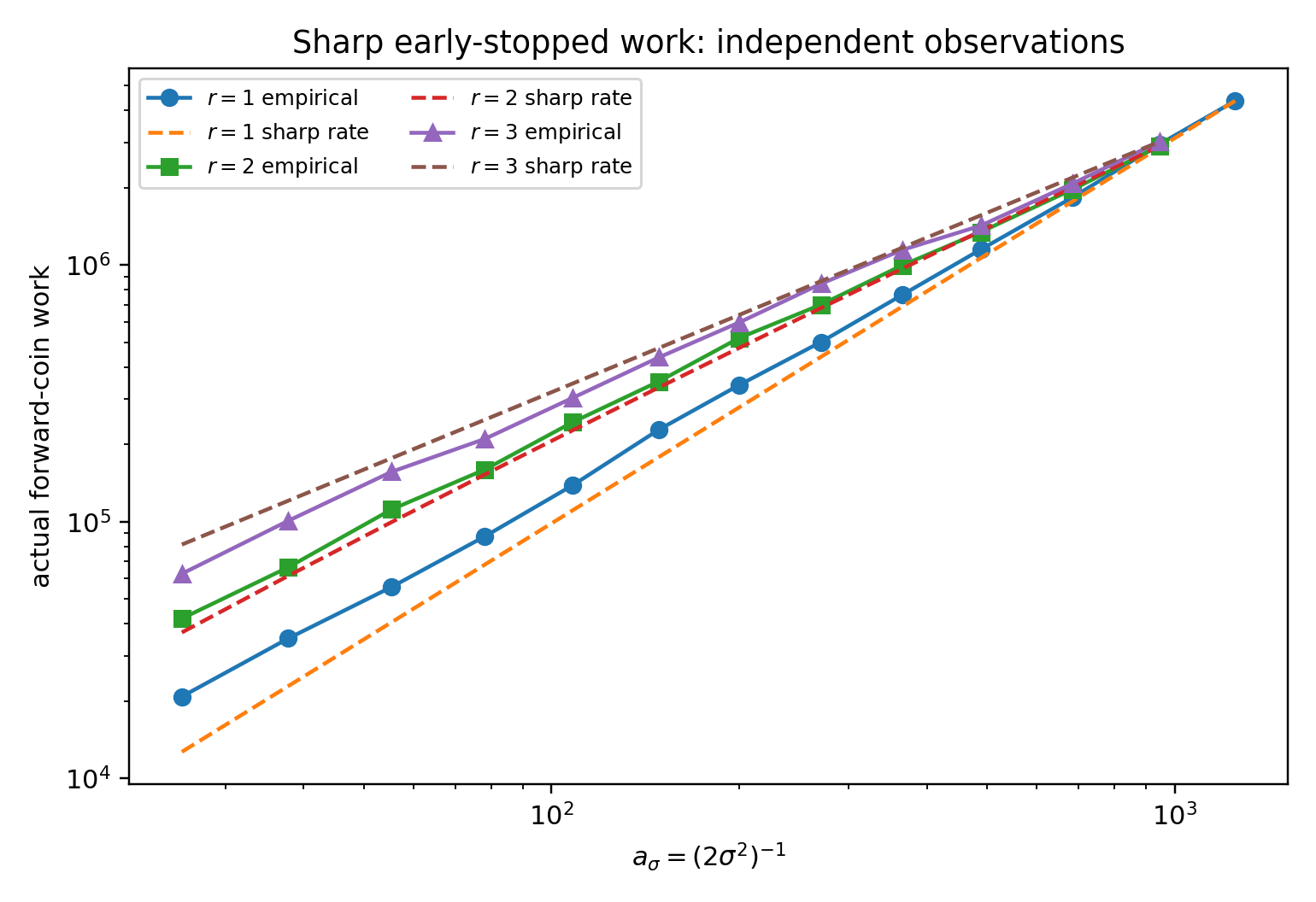}
\caption{Actual work and sharp reference rates.}
\end{subfigure}\hfill
\begin{subfigure}{0.49\textwidth}
\centering
\includegraphics[width=\textwidth]{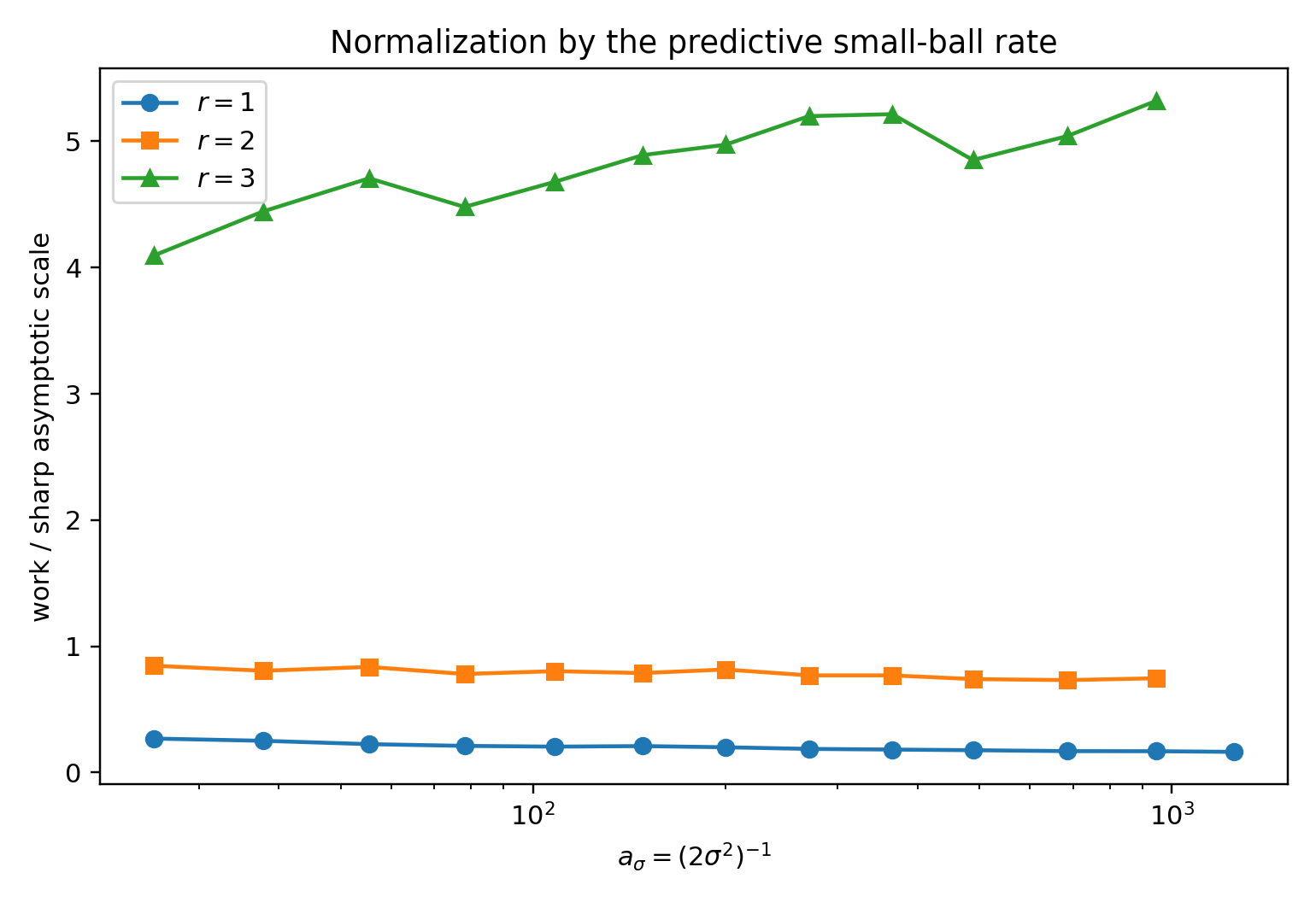}
\caption{Work normalized by $\gamma$ times the predicted sharp scale.}
\end{subfigure}
\caption{Sharp actual-work scaling under independent observations.}
\label{fig:sharp-work}
\end{figure}

The empirical scaling distinguishes the three regimes predicted by the sharp work theory. For $d_{\mathrm{pred}}=3$, the tail exponent $1.006$ is essentially the predicted linear rate. At the critical dimension $d_{\mathrm{pred}}=2$, removing the logarithmic factor gives a tail exponent $0.965$, close to the predicted value one. Convergence is slower for $d_{\mathrm{pred}}=1$: the fitted tail exponent $1.418$ remains below the asymptotic value $3/2$, and the normalized work retains a mild downward drift over the simulated range. The experiment measures the work required to process the master PPP rather than posterior Monte Carlo accuracy; consequently, the small number of retained atoms at the smallest noise levels does not enter the work estimate. This experiment deliberately isolates the independent scalar-factor architecture for which the sharp actual-work theorem is proved.

\FloatBarrier
\subsection{A function-valued heterogeneous reaction coefficient}

We next place the function-valued unknown directly in the elliptic operator. Let
\[
q_\xi(x)=40\left\{\frac12+\sum_{\ell=1}^{\infty}\omega_\ell\xi_\ell\cos(2\pi k_\ell\cdot x)\right\},
 \qquad \xi_\ell\stackrel{\rm iid}{\sim}\Unif[-1,1],
\]
where $\omega_\ell=A(1-\rho)\rho^{\ell-1}$, $A=0.48$, $\rho=0.5$. Then $0.8\le q_\xi(x)\le39.2$ almost surely and
\[
\left(32+q_\xi(x)-\frac12\Delta\right)u_{q_\xi}(x)=32,
 \qquad x\in\Torus.
\]

Eight observations are used with $\sigma=0.1$, and a fixed low-mode
field supplies synthetic data. Table~\ref{tab:hetero-forward-data} reports
the sensor data, continuum forward-coin estimates, and mean inspected killing
events per forward coin. The experiment tests agreement with a deterministic
reference posterior, rather than recovery of the full field from eight
observations.

\begin{table}[!tbp]
\centering\scriptsize\setlength{\tabcolsep}{3.2pt}
\caption{Function-valued reaction-coefficient example: observation data and continuum forward-oracle validation.}
\label{tab:hetero-forward-data}
\begin{tabular}{cccccc}
\toprule
sensor $x_j$ & FD64 & perturbation & $y_j$ & coin estimate & potential-killing events\\
\midrule
$(0.5,0.5)$ & 0.65030 & 0.008 & 0.65830 & 0.65053 & 0.811\\
$(0,0.5)$ & 0.52586 & -0.006 & 0.51986 & 0.52457 & 0.658\\
$(0,0)$ & 0.57319 & 0.006 & 0.57919 & 0.57190 & 0.722\\
$(0.5,0)$ & 0.75309 & -0.007 & 0.74609 & 0.75033 & 0.956\\
$(0.5,0.72)$ & 0.68927 & 0.006 & 0.69527 & 0.68933 & 0.863\\
$(0.5,0.28)$ & 0.68927 & -0.005 & 0.68427 & 0.68857 & 0.863\\
$(0,0.28)$ & 0.54389 & 0.004 & 0.54789 & 0.54167 & 0.675\\
$(0,0.72)$ & 0.54389 & -0.006 & 0.53789 & 0.54407 & 0.675\\
\bottomrule
\end{tabular}
\end{table}

Figure~\ref{fig:hetero-forward} compares the same continuum coin estimates with deterministic FD refinement.

\begin{figure}[!tbp]
\centering
\includegraphics[width=0.76\textwidth]{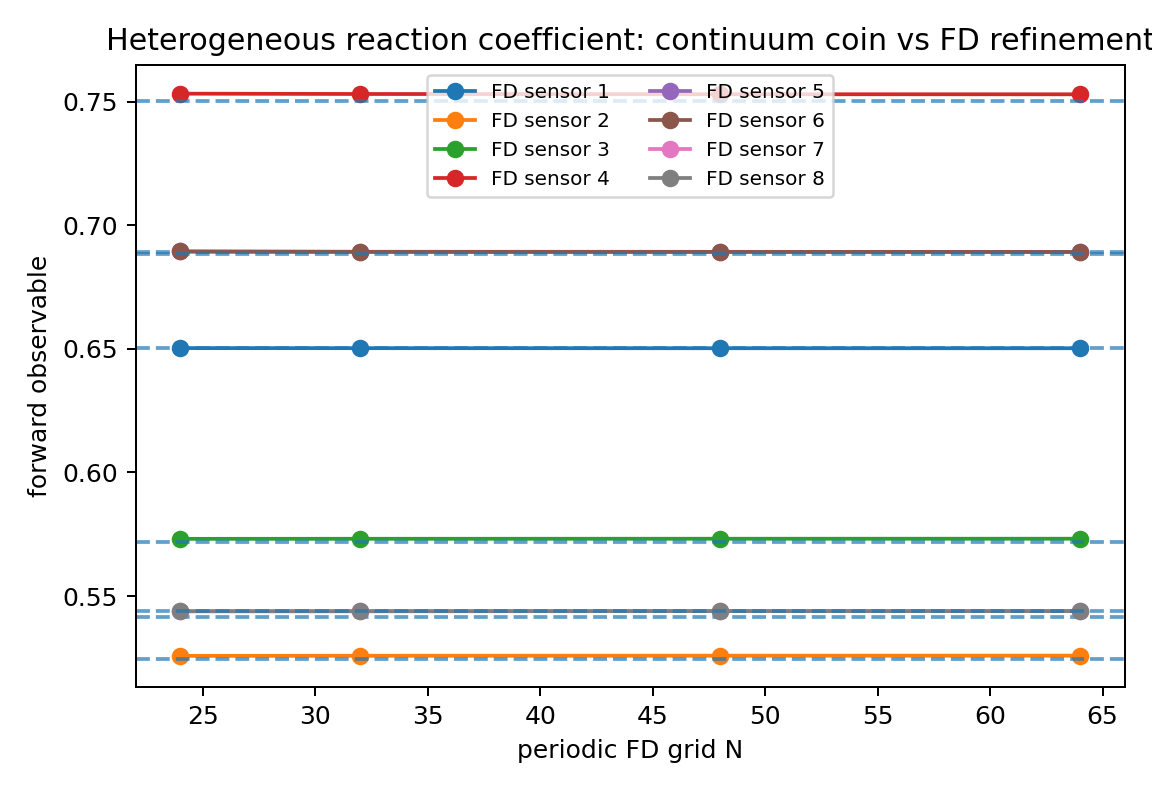}
\caption{Continuum forward-coin estimates (dashed levels) compared with deterministic FD refinement at the eight sensors.}
\label{fig:hetero-forward}
\end{figure}

The deterministic reference uses the first 24 modes; the omitted pointwise field amplitude is at most $2.86\times10^{-8}$. This truncation is used only for validation. The likelihood-coin sampler has no fixed coefficient cap. Each prior candidate starts with a cache of 16 iid coefficients. If the deterministic geometric tail enclosure does not resolve a field-value Bernoulli decision, fresh iid coordinates are appended one at a time from the product prior while the same auxiliary uniform decision is continued. Revelation stops as soon as the shrinking tail enclosure certifies that decision. Thus the implementation follows the infinite-series construction of Theorem~\ref{thm:lazy-function}: the stored prefix is a dynamically extensible cache, not a truncation of the posterior parameter. In the reported posterior run 290 new coefficients were generated on demand across 9446 master candidates, and the largest prefix required by any candidate was 28 coefficients. The average number of inspected coefficients per potential-killing event was about $1.92$.

The Bernstein order is selected from $n\in\{48,64,80,96,112,128\}$. Figure~\ref{fig:hetero-factory} shows the tradeoff between master-process inflation and coefficient-inspection work; the empirical score is minimized at $n=80$. The compensated likelihood estimates fluctuate around the FD64 reference value $0.88665$ without systematic drift in $n$.

\begin{figure}[!tbp]
\centering
\begin{subfigure}{0.49\textwidth}
\centering
\includegraphics[width=\textwidth]{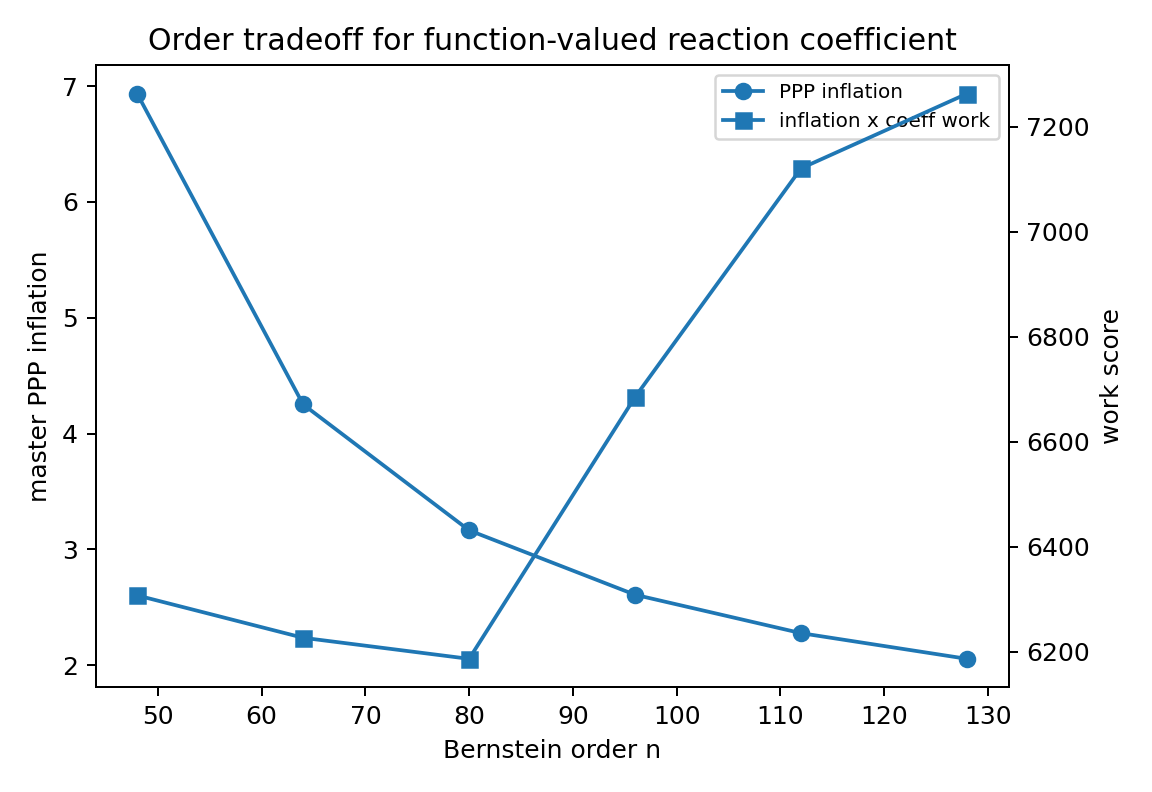}
\caption{Master inflation and empirical work score.}
\end{subfigure}\hfill
\begin{subfigure}{0.49\textwidth}
\centering
\includegraphics[width=\textwidth]{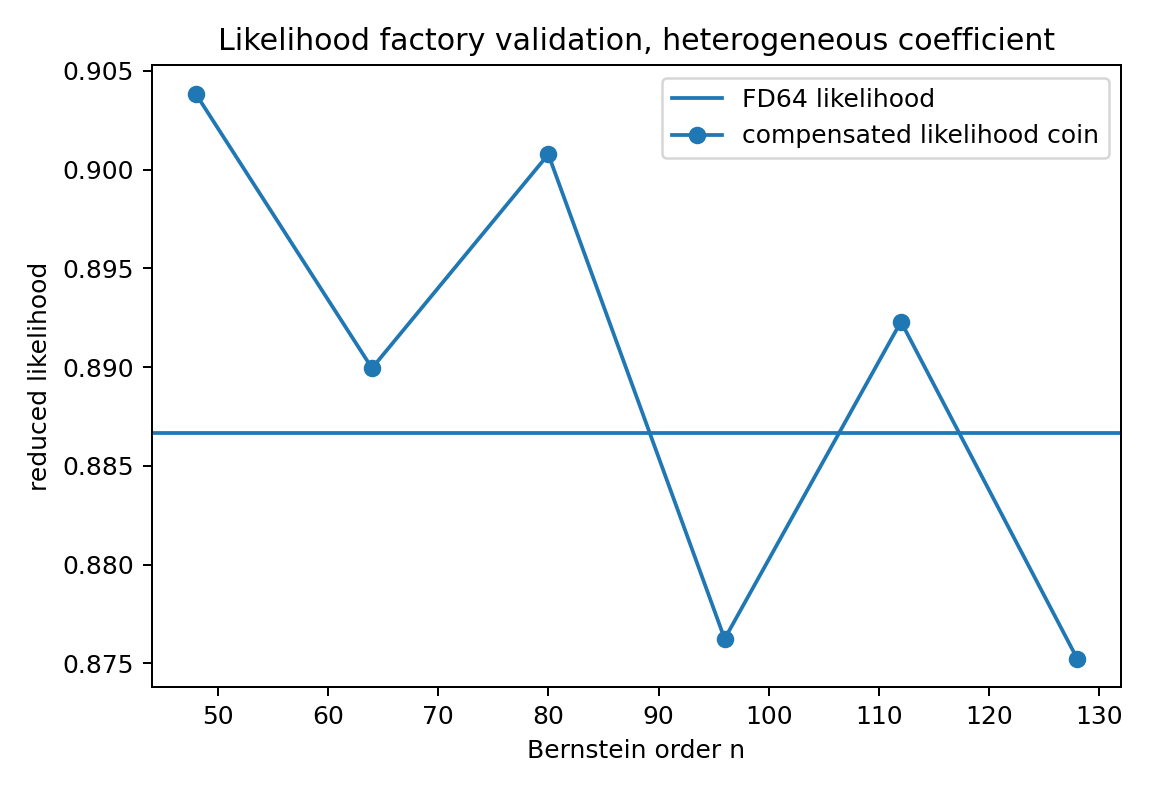}
\caption{Compensated likelihood estimates across orders.}
\end{subfigure}
\caption{Bernstein-order diagnostics for the function-valued example.}
\label{fig:hetero-factory}
\end{figure}

A $20\times20$ finite-difference importance calculation based on 5000 prior functions is compared with the posterior PPP in Table~\ref{tab:hetero-posterior}. The PPP uses $\gamma=3000$ and $n=80$; 9446 master candidates produce 658 retained atoms.

\begin{table}[!tbp]
\centering\small
\caption{Function-valued reaction-coefficient example: posterior validation against the finite-difference importance reference.}
\label{tab:hetero-posterior}
\begin{tabular}{lccccc}
\toprule
method & $Z$ & $E[\xi_1\mid y]$ & $E[\xi_2\mid y]$ & $E[\xi_3\mid y]$ & $E[\xi_4\mid y]$\\
\midrule
FD importance reference & 0.216810 & 0.62107 & -0.17511 & 0.00531 & -0.02136\\
posterior PPP & 0.219333 & 0.60203 & -0.13922 & -0.01602 & -0.01271\\
\bottomrule
\end{tabular}
\end{table}

Figure~\ref{fig:hetero-post} makes the validation criterion explicit. The coefficient plot includes truth markers only for context. The field plot compares the four-mode FD-reference posterior mean directly with the four-mode PPP posterior mean and their difference. The relative $L^2$ difference between these two projected posterior means is approximately $8.87\times10^{-3}$, with maximum pointwise difference about $0.396$. Their discrepancy from the synthetic truth instead reflects limited identifiability under the eight-observation design.

\begin{figure}[!tbp]
\centering
\begin{subfigure}{0.40\textwidth}
\includegraphics[width=\textwidth]{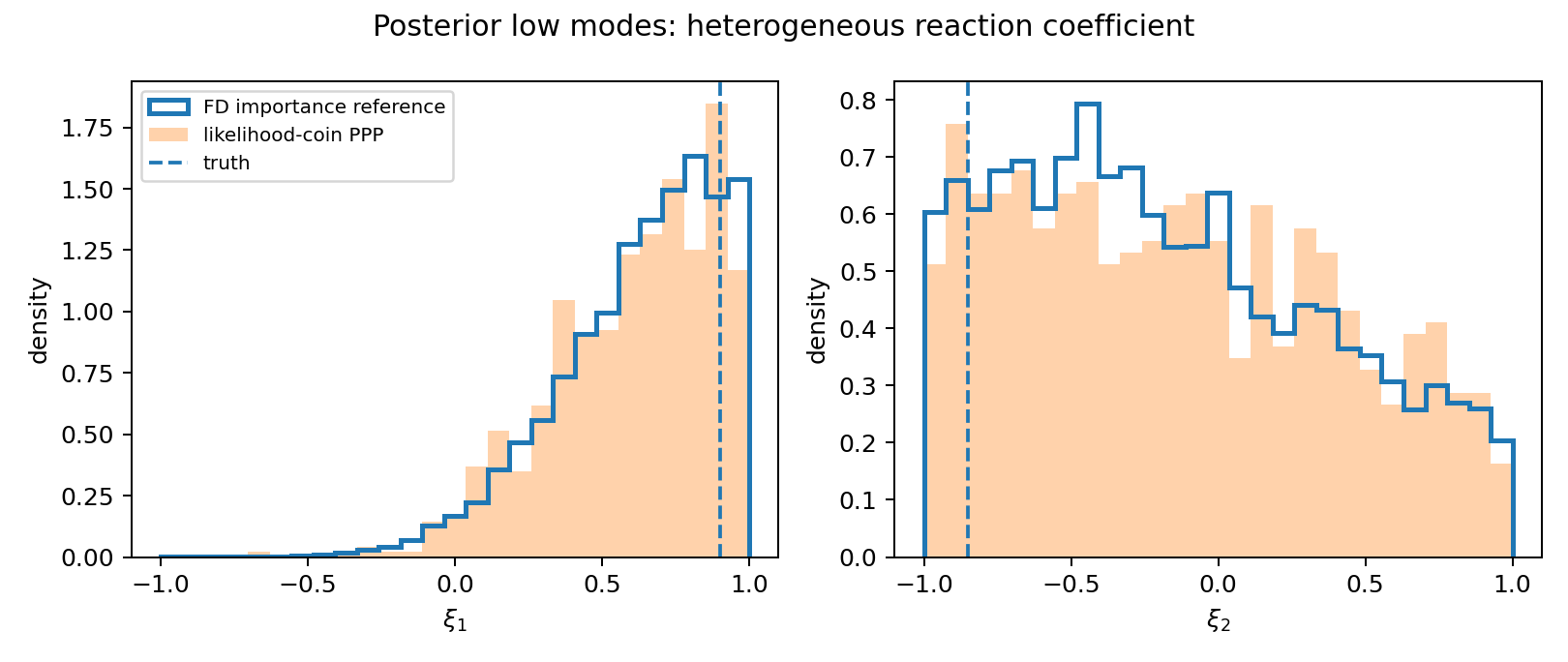}
\caption{First two posterior coefficient marginals.}
\end{subfigure}\hfill
\begin{subfigure}{0.58\textwidth}
\includegraphics[width=\textwidth]{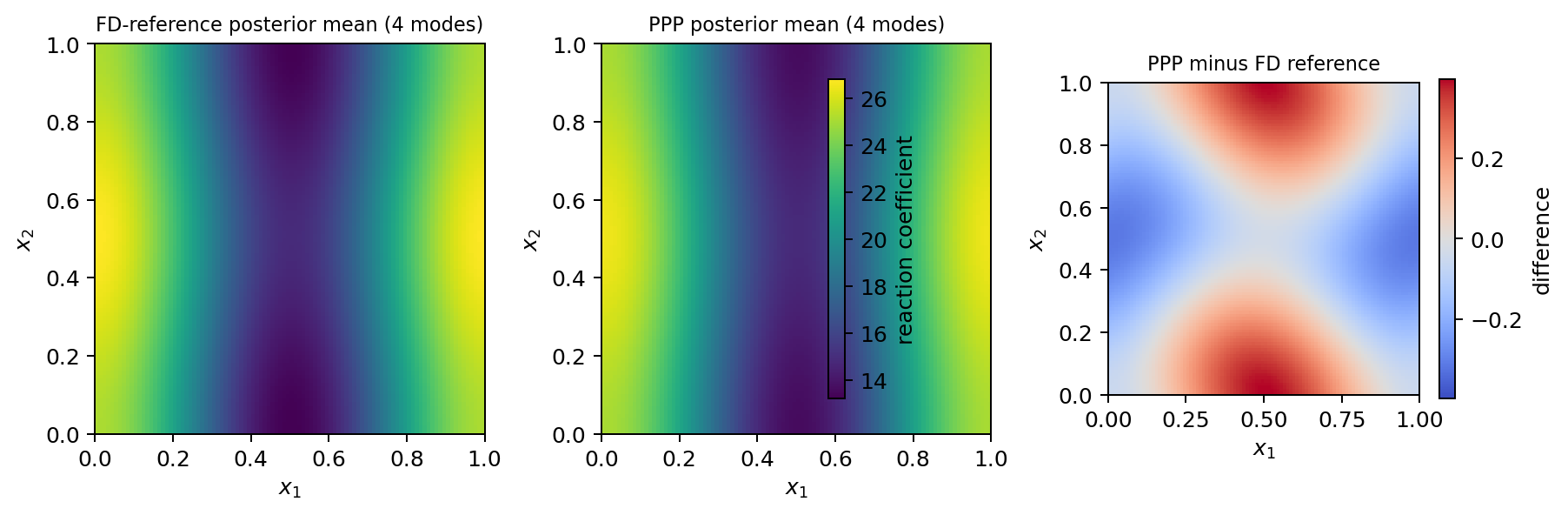}
\caption{Four-mode posterior-mean fields and their difference.}
\end{subfigure}
\caption{Posterior validation for the heterogeneous reaction coefficient.}
\label{fig:hetero-post}
\end{figure}

Overall, this experiment demonstrates that the continuum likelihood-coin
construction can be implemented for a genuinely function-valued unknown.
The stochastic sampler agrees with the deterministic reference at the
posterior level, while neither a spatial PDE discretization nor a fixed
truncation of the random field is introduced into the posterior target.
The next experiment examines the computational consequence of the former
property by comparing continuum sampling with finite-difference computation
at matched posterior accuracy.


\subsection{Matched-accuracy cost against deterministic PDE rejection}
\label{subsec:cost-benchmark}

To assess the computational effect of removing deterministic forward discretization, we compare the continuum likelihood-coin sampler with deterministic FD prior rejection at the \emph{same target RMS error} for the posterior mean of the first four coefficients in the heterogeneous reaction-coefficient problem. This comparison uses the same observation locations, the same data, and bilinear periodic interpolation at every FD resolution. A $64\times64$ FD posterior is used as a high-resolution numerical reference for the benchmark; it is not treated as an exact continuum posterior. The benchmark uses a common pool of 5000 prior draws across FD grids and ten continuum timing replicates. Its purpose is to compare accuracy--cost mechanisms on a common implementation rather than to estimate a hardware-independent threshold.

Let $\mu_\star$ and $\Sigma_\star$ denote the reference posterior mean and covariance of $(\xi_1,\ldots,\xi_4)$. For $m$ independent continuum posterior atoms, the benchmark uses
\begin{equation}
 \operatorname{RMSE}_{\rm coin}^2(m)\approx \frac{\operatorname{tr}(\Sigma_\star)}{m}.
 \label{eq:rmse-coin-benchmark}
\end{equation}
For an FD grid $N$, let $\mu_N$, $\Sigma_N$, and $Z_N$ be the corresponding discretized posterior mean, covariance, and reduced evidence. Its projected rejection error and cost are
\begin{equation}
 \operatorname{RMSE}_{N}^2(m)\approx
 \|\mu_N-\mu_\star\|_2^2+\frac{\operatorname{tr}(\Sigma_N)}{m},
 \qquad
 C_N(m)=m\,\frac{t_N}{Z_N},
 \label{eq:rmse-fd-benchmark}
\end{equation}
where $t_N$ is the measured mean time for one deterministic PDE likelihood evaluation. Thus a grid is infeasible for a requested tolerance $\varepsilon$ whenever its estimated discretization bias $\|\mu_N-\mu_\star\|_2$ already exceeds $\varepsilon$. The factor $Z_N^{-1}$ makes the prior-rejection penalty explicit on the deterministic side as well.

The continuum timing is measured from ten independent likelihood-coin runs after JIT compilation, each with PPP scale $\gamma=3000$, and pooled per retained atom. The deterministic benchmark uses a common pool of 5000 prior draws at grids $N\in\{16,20,24,32,40,48,64\}$, which reduces noise in differences between grid-dependent posterior means. These are implementation-level timings on one machine, not hardware-independent complexity constants.

The enlarged common pool gives a monotone decrease in the estimated four-mode posterior-mean bias, from $7.39\times10^{-4}$ at $N=16$ to $4.4\times10^{-5}$ at $N=48$. Across the ten continuum timing replicates, the pooled cost is $3.376\times10^{-3}$ seconds per retained atom; the replicate wall times have a coefficient of variation of about $1.2\%$. These diagnostics indicate that the qualitative accuracy--cost comparison is not driven by a single timing realization.

\begin{figure}[!tbp]
\centering
\includegraphics[width=0.72\textwidth]{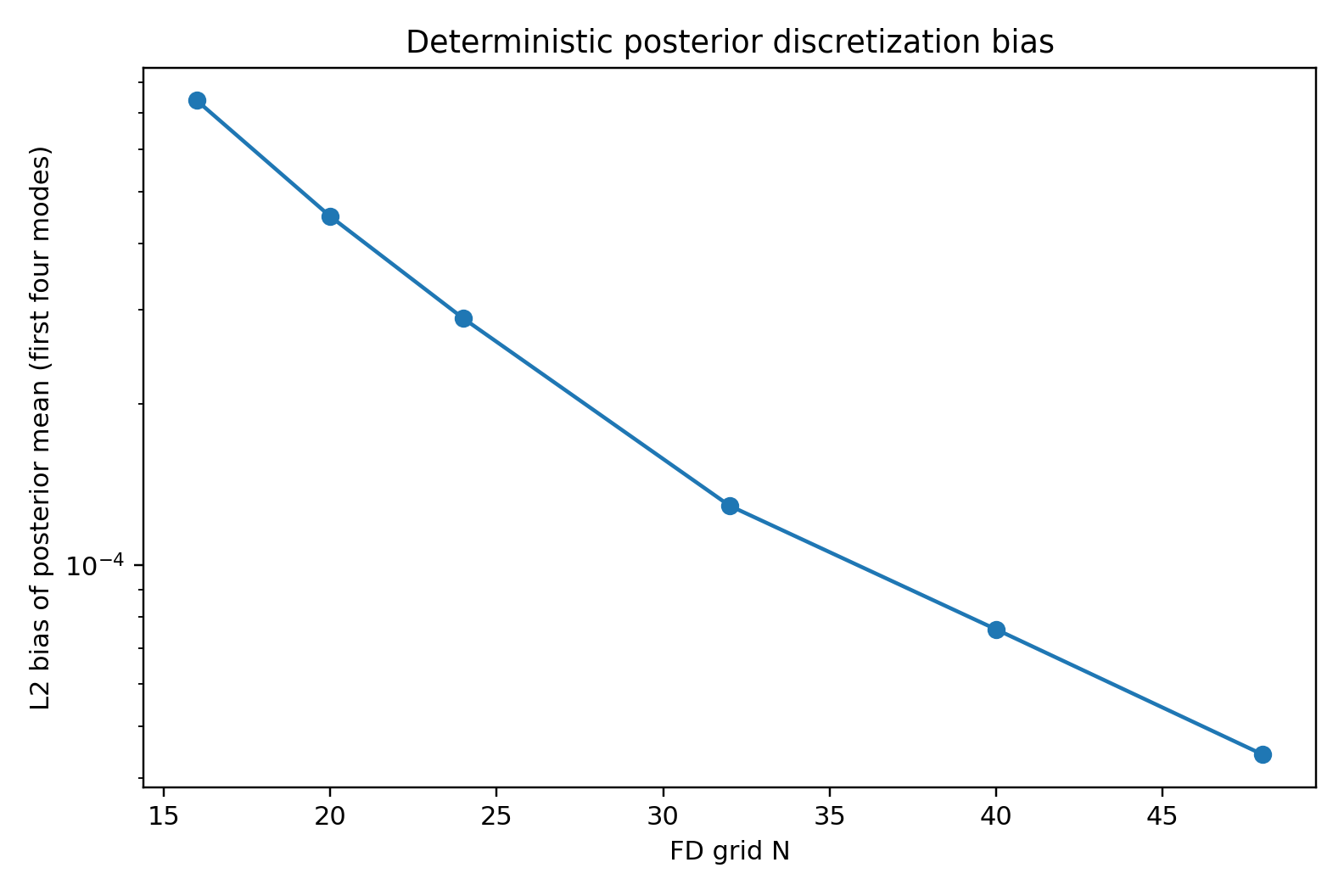}
\caption{Estimated FD posterior-mean discretization bias relative to the $64\times64$ reference.}
\label{fig:posterior-discretization-bias}
\end{figure}

Figure~\ref{fig:posterior-discretization-bias} shows the deterministic bias component that was absent from a comparison based only on time per retained draw. At coarse resolution this bias is small enough for moderate accuracy targets, so inexpensive deterministic solves dominate. At tighter tolerances, however, a fixed grid ceases to be admissible regardless of how many Monte Carlo samples are drawn.

\begin{figure}[!tbp]
\centering
\includegraphics[width=0.80\textwidth]{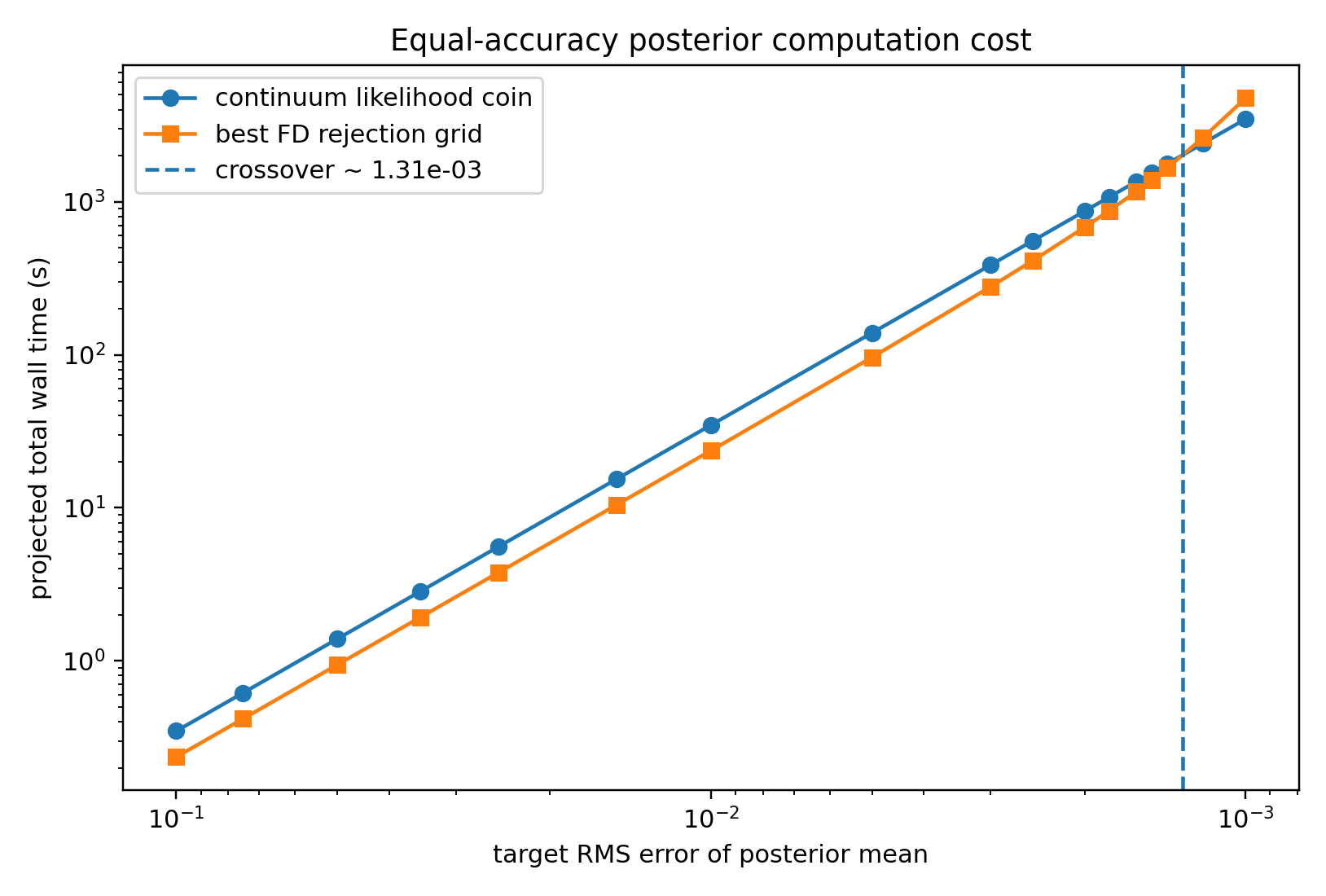}
\caption{Projected wall time at matched posterior-mean RMS error. The dashed line marks the implementation-specific crossover near $1.31\times10^{-3}$.}
\label{fig:equal-accuracy-cost}
\end{figure}

The matched-accuracy comparison in Figure~\ref{fig:equal-accuracy-cost} shows a regime-dependent tradeoff. For moderate tolerances, the inexpensive $N=16$ FD rejection calculation is cheaper. As the target approaches the $N=16$ discretization-bias floor, however, the number of posterior draws required by \eqref{eq:rmse-fd-benchmark} grows rapidly even before that grid becomes infeasible. The projected cost curves cross at approximately $1.31\times10^{-3}$. At the tightest target shown, $10^{-3}$, the cost-minimizing admissible grid has refined to $N=20$. The continuum method has no spatial-discretization bias term in \eqref{eq:rmse-coin-benchmark}. We regard the crossing as an implementation-specific indication of the accuracy regime in which this distinction matters, not as a general speed advantage: its location depends on the PDE solver, implementation, hardware, prior, data, and accuracy functional.

The comparison is intentionally focused: it contrasts two independent-sampling constructions while accounting for deterministic discretization bias in the accuracy budget. It is not a benchmark against optimized multilevel, surrogate, transport, SMC, or MCMC methods. The $64\times64$ reference and the estimated bias/covariance quantities also carry finite-sample and residual discretization error. Within this controlled comparison, the experiment shows how the absence of a spatial-discretization bias term changes the accuracy--cost balance as the requested tolerance is tightened.

\section{Conclusion}
\label{sec:conclusion}

We introduced an exact direct-sampling framework for Bayesian inverse problems with a bounded Bernoulli-access interface. Forward Bernoulli events are converted into scaled Gaussian likelihood coins, and Poisson thinning produces posterior atoms that are iid conditional on the retained count. For independent Gaussian observations, the implemented early-stopped factory admits an exact mean-work identity and sharp small-noise regimes determined by local prior-predictive geometry. A factorial-moment construction shows how the likelihood factory can also accommodate correlated Gaussian errors.

The construction separates the statistical-computational mechanism from the model-specific realization of the forward oracle. For the bounded elliptic resolvent class considered here, Feynman--Kac sampling, Poisson killing, and lazy random-series decisions provide continuum Bernoulli access for a function-valued coefficient. In this realization, neither a deterministic spatial mesh nor a fixed truncation of the random field is part of the posterior target. The numerical experiments validate the forward and likelihood coins, support the predicted stopped-work regimes, and show agreement between the likelihood-coin posterior sampler and independent deterministic reference calculations.

The matched-accuracy benchmark further separates Monte Carlo error from deterministic posterior discretization bias. In the tested implementation, coarse finite-difference rejection is cheaper for moderate accuracy targets, whereas its projected cost increases as the requested tolerance approaches the coarse-grid bias floor and eventually requires grid refinement. The continuum sampler has no corresponding spatial-discretization term in its posterior error budget. The observed crossover is therefore best interpreted as a problem- and implementation-specific illustration of this tradeoff, rather than as a universal speed comparison.

The main outcome is a reusable route from Bernoulli-accessible forward observables to exact posterior samples, together with a work analysis that reflects the early stopping used in the implementation. Extending the class of forward representations and likelihood models admitting practical Bernoulli factories, and sharpening adaptive order-selection strategies for the resulting exact samplers, are natural directions for further work.

\appendix

\section{Bernstein compensation details}
\label{app:bernstein-compensation}

For the normalized scalar factors used in Section~\ref{sec:factory}, the auxiliary normalization in \eqref{eq:ck} is in fact inactive. Since $\beta_{k,n}(y)$ is convex in $k$, the maximum of $c_{k,n}(y)=\beta_{k,n}(y)-\min_j\beta_{j,n}(y)$ is attained at $k=0$ or $k=n$. For every $k\in\{0,\ldots,n\}$,
\[
 \beta_{0,n}(y)-\beta_{k,n}(y)
 =\frac{k\{2y(n-1)-(k-1)\}}{n(n-1)}
 \le \frac{k(2n-k-1)}{n(n-1)}\le1.
\]
Hence $c_{0,n}(y)\le1$. The symmetry
$\beta_{n-k,n}(1-y)=\beta_{k,n}(y)$ gives $c_{n,n}(y)\le1$ as well. Therefore
\[
 0\le c_{k,n}(y)\le1,
 \qquad C_n(y)=1,
 \qquad y\in[0,1].
\]
The factor $C_n$ is retained in the main formulas because it makes the Bernoulli admissibility explicit and allows harmless rescalings of the scalar construction without changing the derivation.

The centered scalar factor is a useful special case of the same construction. If a particular component has $y=0$, then $b_n(0)=0$, and already at $n=2$,
\[
 \beta_{K,2}(0)=\frac{K(K-1)}2.
\]
This variable equals one if and only if both input $p$-coins are one. The Poisson zero-event transform therefore reduces to a Poisson number of independent $p^2$-events and has success probability $e^{-ap^2}$. This simplification concerns one scalar likelihood factor; the observation model in the paper remains the vector model \eqref{eq:obsmodel}.

\section{Continuous safe shift for correlated Gaussian errors}
\label{app:correlated-shift}
Let $\delta_j=\Omega_{jj}/(n_j-1)$ and
\[
\widetilde\beta_{\bm n}^{\Omega}(z;y)
=(z-y)^\top\Omega(z-y)-\sum_{j=1}^J\delta_j z_j(1-z_j),
\qquad z\in[0,1]^J.
\]
The count grid is contained in $[0,1]^J$, so the continuous minimum is no larger than the discrete one and hence $\bar b_{\bm n}^{\Omega}(y)\ge b_{\bm n}^{\Omega}(y)$. Moreover,
\[
\nabla^2\widetilde\beta_{\bm n}^{\Omega}
=2\Omega+2\operatorname{diag}(\delta_1,\ldots,\delta_J)\succ0,
\]
so the box-constrained problem is strictly convex. Put
$S_{\bm n}(z)=\sum_j\delta_j z_j(1-z_j)$. Since $t(1-t)$ is one-Lipschitz on $[0,1]$,
$|S_{\bm n}(z)-S_{\bm n}(y)|\le O(n_{\min}^{-1})\|z-y\|_2$, while
$(z-y)^\top\Omega(z-y)\ge\lambda_{\min}(\Omega)\|z-y\|_2^2$.
Optimizing this quadratic bound gives
\[
\bar b_{\bm n}^{\Omega}(y)
=\sum_{j=1}^J\frac{\Omega_{jj}y_j(1-y_j)}{n_j}+O(n_{\min}^{-2}).
\]
Evaluating at a nearest count-grid point gives the same expansion for $b_{\bm n}^{\Omega}(y)$.

\section{Small-ball integral asymptotics}
\label{app:small-ball}

The following standard regular-variation lemma supplies the analytic step used
in Theorem~\ref{thm:sharp-actual-work} and
Corollary~\ref{cor:work-per-atom}; see, for example,
\cite{BinghamGoldieTeugels1987}.

\begin{lemma}

Let $Q:\X\to[0,\infty)$ be measurable and suppose
$F(t):=\mu_0\{Q\le t\}=ct^\alpha\{1+o(1)\}$ as $t\downarrow0$, with
$c>0$ and $\alpha>0$. Let $w:\X\to[0,\infty)$ be bounded and assume that
for some $w_0>0$,
\[
 \sup_{Q(q)\le t}|w(q)-w_0|\longrightarrow0
 \qquad(t\downarrow0).
\]
If $a\to\infty$, $b_a\ge0$, and $ab_a\to\beta\in(0,\infty)$, then
\[
\int w(q)\frac{1-e^{-a(b_a+Q(q))}}{b_a+Q(q)}\,\mu_0(\mathrm dq)
\sim
\begin{cases}
 c w_0\Psi_\alpha(\beta)a^{1-\alpha},&0<\alpha<1,\\[0.8ex]
 c w_0\log a,&\alpha=1,
\end{cases}
\]
where $\Psi_\alpha$ is defined in \eqref{eq:Psi-alpha}. If $\alpha>1$,
then $Q^{-1}$ is locally integrable and
\[
\int w(q)\frac{1-e^{-a(b_a+Q(q))}}{b_a+Q(q)}\,\mu_0(\mathrm dq)
\longrightarrow
\int \frac{w(q)}{Q(q)}\,\mu_0(\mathrm dq)<\infty.
\]
Moreover,
\[
 \int e^{-aQ(q)}\,\mu_0(\mathrm dq)
 \sim c\Gamma(\alpha+1)a^{-\alpha}.
\]
\end{lemma}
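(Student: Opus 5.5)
The plan is to reduce every integral to a Lebesgue--Stieltjes integral against the distribution function $F$ of $Q$ under $\mu_0$, and then rescale. Write $\phi_a(s)=(1-e^{-a(b_a+s)})/(b_a+s)$, which is positive and decreasing in $s\ge0$ and bounded by $a$. First I remove the weight $w$. Fix $\epsilon>0$ and choose $t_0$ with $|w-w_0|\le\epsilon$ on $\{Q\le t_0\}$. On $\{Q>t_0\}$ we have $\phi_a\le 1/t_0$, so that piece is $O(1)$. In the cases $\alpha\le1$ this is negligible against the target orders $a^{1-\alpha}$ or $\log a$. On $\{Q\le t_0\}$ the integral equals $(w_0+O(\epsilon))\int_{[0,t_0]}\phi_a(s)\,\mathrm dF(s)$. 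It therefore suffices to compute $\int_{[0,t_0]}\phi_a\,\mathrm dF$ and then let $\epsilon\downarrow0$.

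For $0<\alpha<1$, substitute $s=t/a$. Since $F$ has no atom at $0$ (because $F(t)\to0$), the rescaled measure $\mathrm dF(t/a)$ times $a^{\alpha}$ converges vaguely to $c\alpha t^{\alpha-1}\mathrm dt$ on $[0,\infty)$. The integrand becomes
\[
a\,\frac{1-e^{-(ab_a+t)}}{ab_a+t},
\]
which converges uniformly on compacts to $a\,(1-e^{-(\beta+t)})/(\beta+t)$. The integral is thus $a^{1-\alpha}$ times a quantity tending to $c\,\Psi_\alpha(\beta)$. This needs a tail bound. Integrating by parts, $\int \phi\,\mathrm dF=\phi(t_0)F(t_0)-\int F\,\phi'$. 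Using $F(s)\le C s^\alpha$ on $[0,t_0]$ and $|\partial_s\phi_a|\lesssim \min(a^2,(b_a+s)^{-2})$, the contribution from $t\ge T$ after rescaling is $O(T^{\alpha-1})$, which is uniformly small. The equality of the two expressions for $\Psi_\alpha$ follows by writing $(1-e^{-x})/x=\int_0^1e^{-xs}\,\mathrm ds$ and applying Fubini.

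For $\alpha=1$ I use the same integration by parts. The main term is $\int_{1/a}^{t_0} F(s)/s^2\,\mathrm ds\sim c\log a$, because $F(s)\sim cs$ and $\phi_a(s)\approx 1/s$ on $[K/a,t_0]$. The region $s\le K/a$ contributes $O(1)$, and the error in $1-e^{-a(b_a+s)}$ over $[K/a,t_0]$ is $O(e^{-K})\log a$. Letting $K\to\infty$ gives $\sim c w_0\log a$.

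For $\alpha>1$, integration by parts together with $F(s)\le Cs^\alpha$ gives $\int_{[0,t_0]} s^{-1}\,\mathrm dF<\infty$, so $Q^{-1}$ is locally integrable. Since $\phi_a\le 1/Q$ and $\phi_a\to 1/Q$ pointwise (using $b_a\to0$), dominated convergence gives the limit. The last claim follows by the same rescaling, via $\int e^{-aQ}\,\mathrm d\mu_0=a\int_0^\infty e^{-at}F(t)\,\mathrm dt$ and a Tauberian/Abelian substitution $t=u/a$. This yields $c\,\Gamma(\alpha+1)a^{-\alpha}$, with the region $t>t_0$ exponentially small.

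The main obstacle is the uniform tail control in the $0<\alpha<1$ case, which is needed to justify exchanging the limit with an integral over an unbounded rescaled range. I will handle it through the integration-by-parts representation and the bound $F(s)\le C s^\alpha$ on $[0,t_0]$.
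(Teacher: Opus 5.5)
Your proof is correct and follows essentially the same route as the paper. Both arguments push $\mu_0$ forward to the law of $Q$ and rescale by $t=aQ$ for $0<\alpha<1$. Both split at the scales $a^{-1}$ and a fixed $\delta$ when $\alpha=1$, and both use integration by parts with $F(s)\le Cs^\alpha$ plus dominated convergence for $\alpha>1$ and for the Laplace asymptotic.

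The differences are minor. You handle $w$ by an $\epsilon$-sandwich on $\{Q\le t_0\}$, whereas the paper uses the weighted sublevel function $F_w(t)\sim cw_0t^\alpha$. You also prove the Karamata/Stieltjes-transform step directly, through vague convergence of $a^\alpha\,\mathrm dF(t/a)$ and an explicit $O(T^{\alpha-1})$ tail bound, where the paper simply cites it.
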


\begin{proof}
Push $\mu_0$ forward by $Q$. The assumption on $w$ implies that the weighted
sublevel measure
\[
 F_w(t):=\int_{\{Q\le t\}}w(q)\,\mu_0(\mathrm dq)
\]
satisfies $F_w(t)\sim cw_0t^\alpha$ under the stated assumption $w_0>0$. For $0<\alpha<1$,
Karamata's theorem for Stieltjes transforms, applied after the rescaling
$t=aQ$, gives
\[
 a^{\alpha-1}
 \int\frac{1-e^{-a(b_a+Q)}}{b_a+Q}\,\mathrm dF_w(Q)
 \longrightarrow
 c w_0\alpha\int_0^\infty
 t^{\alpha-1}\frac{1-e^{-(\beta+t)}}{\beta+t}\,\mathrm dt,
\]
which is the first displayed asymptotic. At $\alpha=1$, split the integral
at a fixed small $\delta$. On $a^{-1}\ll Q\le\delta$ the kernel is
$Q^{-1}\{1+o(1)\}$, and Stieltjes integration by parts with
$F_w(t)\sim cw_0t$ gives $cw_0\log a\{1+o(1)\}$; the regions
$Q=O(a^{-1})$ and $Q\ge\delta$ contribute only $O(1)$.

If $\alpha>1$, Stieltjes integration by parts gives
\[
 \int_{\{0<Q\le\delta\}}Q^{-1}\,\mu_0(\mathrm dq)
 =\frac{F(\delta)}{\delta}
 +\int_0^\delta\frac{F(t)}{t^2}\,\mathrm dt<\infty.
\]
The kernel is bounded by $Q^{-1}$ and converges pointwise to $Q^{-1}$, so
dominated convergence yields the high-dimensional limit. Finally,
the Laplace asymptotic is the standard Karamata Laplace-transform
asymptotic for a distribution function regularly varying at zero with index
$\alpha$.
\end{proof}

\section*{Acknowledgments}
Correspondence may be addressed to Zhiliang Deng.

\bibliographystyle{plainnat}
\bibliography{references}

\end{document}